\newcommand{\mbf}[1]{\mathbf{#1}}
\newcommand{\inR}[2]{
	\ifthenelse{\equal{#2}{1}}{
		#1 \in \mathbb{R}
	}{
	\mathbf{#1} \in \mathbb{R}^{#2}
}}
\newcommand{\lr}[2]{
	\ifthenelse{\equal{#1}{()}}{\left( #2 \right)}{}
	\ifthenelse{\equal{#1}{[]}}{\left[ #2 \right]}{}
	\ifthenelse{\equal{#1}{\|\|}}{\left\| #2 \right\|}{}
	\ifthenelse{\equal{#1}{\{\}}}{\left\{ #2 \right\}}{}
	\ifthenelse{\equal{#1}{||}}{\left| #2 \right|}{}
	\ifthenelse{\equal{#1}{<>}}{\left\langle #2 \right\rangle}{}
}
\newcommand{\RR}[3]{
\ifthenelse{\equal{#3}{1}}{
#1 : \mathbb{R}^{#2} \rightarrow \mathbb{R}
}{
#1 : \mathbb{R}^{#2} \rightarrow \mathbb{R}^{#3}
}}
\DeclareMathOperator*{\argmax}{argmax}
\DeclareMathOperator*{\argmin}{argmin}
\begin{document}

\title{The Sparse Principal Component Analysis Problem: Optimality Conditions and Algorithms
}


\author{Amir Beck         \and
        Yakov Vaisbourd \\
}


\institute{Amir Beck (correspoding author) \at
              Faculty of Industrial Engineering and Management, Technion, Haifa, Israel \\
              \email{becka@ie.technion.ac.il}           
           \and
           Yakov Vaisbourd \at
               Faculty of Industrial Engineering and Management, Technion, Haifa, Israel \\
               \email{yakovv@campus.technion.ac.il}
}

\date{Received: date / Accepted: date}

\maketitle

\begin{abstract}
  Sparse principal component analysis addresses the problem of finding a linear combination of the variables in a given data set with a sparse coefficients vector that maximizes the variability of the data. This model enhances the ability to interpret the principal components, and is applicable in a wide variety of fields including genetics and finance, just to name a few.

  We suggest a necessary coordinate-wise-based optimality condition, and show its superiority over the stationarity-based condition that is commonly used in the literature, and which is the basis for many of the algorithms designed to solve the problem. We devise algorithms that are based on the new optimality condition, and provide numerical experiments that support our assertion that algorithms, which are guaranteed to converge to stronger optimality conditions, perform better than algorithms that converge to points satisfying weaker optimality conditions.
 \keywords{optimality conditions \and principal component analysis \and sparsity constrained problems \and stationarity \and numerical methods}

\end{abstract}

\section{Introduction}

\label{intro}
Principal component analysis (PCA) is a well known data-analytic technique that linearly transforms a given set of data to some equivalent representation. This transformation is defined in such a manner that any variable in the new representation, called a \emph{principal component} (PC), expresses most of the variance in the data, which is not expressed by the PCs that precede it. The linear combination defining each of the PCs is given by a coefficients (also termed \emph{loadings}) vector. In terms of the covariance (or correlation) matrix of the data, the coefficients vector of the \emph{k}-th PC is the eigenvector that corresponds to the \emph{k}-th largest eigenvalue \cite{1-1}.
One major drawback of PCA is that commonly the coefficients vectors are dense, i.e., each PC is a linear combination of much, if not most, of the original variables, which causes a difficulty in interpreting the obtained PCs. This disadvantage encouraged a wide interest in the sparsity constrained version of PCA, which imposes an additional constraint, enforcing the coefficients vector not to exceed some predetermined sparsity level \emph{s}.\\
\indent Enforcing sparsity on the coefficients vector is commonly acceptable in some applications. For example, in the exploration of micro-array gene expression patterns, PCA is employed in order to classify different tissues according to their gene expression. It is also desirable that such discrimination can be executed by utilizing only a small subset of the genes, thus encouraging sparse solutions \cite{3-1_40_Misra2002}. The desire to obtain interpretable coefficients vectors is not the only reason to favor the sparse PCA model. For example, some financial applications will prefer sparse solutions in order to reduce transaction costs \cite{8-1}. Clearly, incorporating an additional sparsity constraint will provide a PC that, generally, does not explain all of the variance which is explained by the regular PC; nevertheless, in such applications, this sacrifice is acceptable with respect to the obtained benefits.
We refer to this formulation as the sparsity constrained formulation, and it is merely one of several alternative formulations considered in the literature. The common alternatives are the result of treating the sparsity term, or its relaxation, by a penalty approach.
The sparse PCA problem is a difficult non-convex problem, and can be optimally solved only for small scale problems by performing exhaustive or a branch and bound search over all possible support sets \cite{3-1_24}. Thus, in order to handle large scale problems, the algorithms proposed in the literature are seeking to find an approximate solution. One of the first methods, suggested by Cadima and Jolliffe \cite{3-1_7}, is to threshold the smallest, in absolute value, elements of the dominant eigenvector. Unfortunately, this remarkably simple approach is known to frequently provide poor results. In \cite{3-1_24} Moghaddam et al. proposed several greedy methods. An advantage of these methods is that they produce a full path of solutions (i.e., a solution for each of the values of sparsity level up to \emph{s}), but the necessity to perform a large amount of eigenvalue computations at each step render them quite computationally expensive. In \cite{3-1_10}, d'Aspremont et al. proposed an approximate greedy approach that obviates the necessity to perform most of the eigenvalue computations by evaluating a lower bound on the eigenvalues, which results in a substantial reduction of computation time. Another approach presented by d'Aspremont et al. in \cite{3-1_10}, and earlier in \cite{3-1_11}, is to consider a semidefinite programming formulation with a rank constraint for some of the relaxed and/or penalized models of PCA. These equivalent formulations are still hard non-convex problems, and thus a relaxed model is solved and an approximate solution is derived for the original problem. The algorithms used to solve the SDP relaxations are not applicable for large scale problems, rendering this approach as non-scalable. In \cite{3-1_17}, encouraged by the LASSO approach suggested for regression \cite{3-1_32}, Jolliffe et al. proposed the absolute value norm constrained formulation under the name SCoTLass (simplified component technique LASSO), which is a relaxation of  the sparsity constrained problem. In practice, the numerical study was conduced on the penalized version by implementing the projected gradient algorithm. The relaxed model was further considered in the literature. An alternating minimization scheme to solve the constrained formulation was proposed in \cite{3-1_35}. Another work that addressed the constrained formulation was motivated by the expectation maximization algorithm for probabilistic PCA \cite{3-1_29}. Even though the work addressed the constrained formulation, the sequence generated by the method in \cite{3-1_29} is guaranteed to be \emph{s}-sparse. Penalized versions were also considered extensively. In \cite{3-1_40} Zou et al. formulated the sparse PCA as a regression-type model, where the \emph{i}-th principal component was approximated by the linear combination of the original variables. A LASSO and ridge penalties are imposed on the coefficients vector forming the elastic net model that generalizes the LASSO \cite{3-1_40_ZouHastie2005} and an alternating minimization algorithm, called SPCA, was proposed. In \cite{3-1_28} Shen and Huang proposed several iterative schemes to solve the penalized versions via regularized SVD. These methods were considered further in \cite{3-1_18}, where a gradient scheme was proposed and a convergence analysis, that was missing in \cite{3-1_28}, was also provided.\\
\indent Recently, Luss and Teboulle showed in \cite{3-1} that the seemingly different methods proposed in \cite{3-1_18,3-1_28,3-1_29,3-1_30,3-1_35,3-1_40} are some particular realizations of the conditional gradient algorithm with unit step-size. The work \cite{3-1} proposed a unified algorithmic framework which they refer to as ConGradU  (the well known conditional gradient with unit step size) and established convergence results, showing that the algorithm produces a point satisfying some necessary first order optimality criteria. Some novel schemes are provided. One of them addresses directly the sparsity constrained formulation of sparse PCA.\\
\indent As already mentioned, none of the methods listed above can guarantee to produce an optimal solution. In addition,  the sparse PCA problem  does not seem to  posses a verifiable necessary and sufficient global optimality condition, and hence, in general, there is no efficient way to check if a given vector is the global optimal solution\footnote{In \cite{3-1_10} the authors suggested a sufficient optimality condition.}. Therefore, the comparison of the methods in the literature is based solely on numerical experiments without providing any theoretical justification for the advantage of a certain method over the others. However, most of the algorithms just listed will produce a solution that satisfies some necessary optimality condition. In a recent work, Beck and Eldar \cite{6-4} employed some of the aforesaid conditions in order to provide an insight regarding the success of the corresponding algorithms. Under the framework of minimizing a continuously differentiable function subject to a sparsity constraint, several necessary optimality conditions were presented. The relations between the different optimality conditions were established, showing that some of the conditions are stronger (that is, more restrictive) than others. An extension to problems over sparse symmetric sets was considered in \cite{BH14}.
In this paper, we adopt this methodology in order to establish a hierarchy between two necessary optimality conditions for  the sparsity constrained sparse PCA problem. The first condition that we consider is a  well known first order condition, that was originally presented  in the context of the sparse PCA problem in \cite{3-1}. We will refer to it as the \emph{complete (co) stationarity} condition. Much of the existing algorithms in the literature are actually guaranteed to converge to a co-stationary point. The second condition, which we call \emph{coordinate-wise (CW) maximality}, is a generalization of one of the conditions considered in \cite{6-4}, and it essentially states that the function value cannot be improved by making changes of at most two coordinates. \\
In the following section we will explicitly define the conditions under consideration. In Section \ref{sec:Nec_Opt_Cond_Hierarchy}, we will establish the relation between the conditions, showing that the CW-maximality condition is stronger (that is, more restrictive) than co-stationarity. In Section \ref{sec:Algos}, we will introduce algorithms that produce points satisfying the aforementioned conditions and finally, in Section \ref{sec:Num_Res}, we will provide a numerical study on simulated and real life data that supports our assertion that algorithms that correspond to stronger conditions are more likely to provide better results.

\section{Necessary Optimality Conditions}
\label{sec:Nec_Opt_Cond}
Throughout the paper, we consider the following sparsity constrained problem:
\begin{gather}
\max \{f(\mbf{x}):\mbf{x}\in S \},
\tag{P}
\label{P}
\end{gather}

where $f$ is a continuously differentiable convex function over $\mathbb{R}^n$ and
$$S  := \{\inR{x}{n}:\|\mbf{x}\|_2\leq1,\|\mbf{x}\|_0\leq s\},$$
with $\|\cdot\|_0$  being the so-called $l_0$-norm defined by $\|\mbf{x}\|_0 := |\lr{\{\}}{i:x_i\neq 0}|$\footnote{Note that the $l_0$-norm is not actually a norm since it does not satisfy the absolute homogeneity property.}. As a special case, when the objective function is chosen as $f(\mbf{x}) = \mbf{x}^T \mbf{A} \mbf{x}$, where $\mbf{A}$ is a given positive semidefinite matrix, problem (P) amounts to the \textit{$l_0$-constrained sparse PCA model}:
\begin{equation}
\label{SPCA}
\tag{SPCA}
\max \{\mbf{x}^T\mbf{Ax}:\mbf{x}\in S \}.
\end{equation}
In PCA applications, $\mbf{A}$ usually stands for the covariance matrix of the data.\\
\indent In this section, we will present two necessary optimality conditions for the general model \eqref{P}. Although our main motivation is to study the sparse PCA problem, we will nonetheless consider the  general model \eqref{P}, since our results are also applicable in this general setting.

Prior to presenting the optimality conditions, we will introduce in the following subsection some notation and definitions that will be used in our analysis.
\subsection{Notation and Definitions}
\label{subsec:Not_and_Def}
A subvector of a given vector $\inR{x}{n}$ corresponding  to a set of indices \mbox{$T\subseteq \lr{\{\}}{1,2,\dots,n}$} is denoted by $\mbf{x}_T$. Similarly, we will denote the subvector of the gradient $\nabla f(\mbf{x})$ corresponding to the indices in $T$ by $\nabla_Tf(\mbf{x})$. The sign of a given  $\alpha \in {\mathbb R}$ is denoted by $\text{sgn}(\alpha)$ and is equal to $1$ for $\alpha \geq 0$ and $-1$ for $\alpha<0$.
The support set of some arbitrary vector $\mbf{x}$ will be denoted by $I_1(\mbf{x})= \{i:x_i\neq0\}$ and its complement by $I_0(\mbf{x})= \{i:x_i=0\}$.
For a given vector $\inR{x}{n}$ and an integer $s\in \lr{\{\}}{1,2,\dots,n-1}$, we will define $M_s(\mbf{x})$ to be the $s$-th largest absolute value component in $\mbf{x}$. For such $\mbf{x}$ and $s$, we will define the sets $I_{>}(\mbf{x},s), I_{=}(\mbf{x},s)$ and $I_{<}(\mbf{x},s)$ as follows:
\begin{align*}
& I_{>}(\mbf{x},s) := \{i:|x_i|>M_s(\mbf{x})\}, \\
& I_{=}(\mbf{x},s) :=
\begin{cases}
\{i:|x_i|=M_s(\mbf{x})\}, & \|\mbf{x}\|_0 \geq s, \\
~~~~~~\qquad \emptyset, & \|\mbf{x}\|_0<s,
\end{cases}
\\
& I_{<}(\mbf{x},s) :=
\begin{cases}
\{i:|x_i|<M_s(\mbf{x})\}, & \|\mbf{x}\|_0\geq s,\\
\{i:x_i=0\}, & \|\mbf{x}\|_0<s.
\end{cases}
\end{align*}
We will also define the set $I_{\geq}(\mbf{x},s) := I_{>}(\mbf{x},s)\cup I_{=}(\mbf{x},s)$ and the set \linebreak $I_{\leq}(\mbf{x},s) := I_{<}(\mbf{x},s)\cup I_{=}(\mbf{x},s)$. Obviously, the sets $I_{>}(\mbf{x},s), I_{=}(\mbf{x},s)$ and $I_{<}(\mbf{x},s)$ form a partition of $\{1,2,\dots,n\}$. Furthermore, when $\|\mbf{x}\|_0<s$, we have that \linebreak $I_{>}(\mbf{x},s) = I_1(\mbf{x}), I_{=}(\mbf{x}) = \emptyset$ and $I_{<}(\mbf{x},s)=I_0(\mbf{x})$.\\
The sets defined above posses some convenient and elementary properties which are given in Lemma \ref{lma:S-QCLP} below. Since all the properties stated in the lemma are rather simple consequences of the definition of the sets $I_{>}(\mbf{x},s),$ $ I_{=}(\mbf{x},s)$, $I_{<}(\mbf{x},s)$, the proof is omitted.
\begin{lemma}
\label{lma:S-QCLP}
\begin{enumerate}
\item If $\mbf{x}\neq \mbf{0}$, then $I_\geq(\mbf{x},s)\neq \emptyset$.
\item If $|I_\geq(\mbf{x},s)|<s$ then $x_j=0$ for all $j\in I_<(\mbf{x},s)$.
\item For any $i\in I_{>}(\mbf{x},s),j \in I_{=}(\mbf{x},s)$ and $k\in I_{<}(\mbf{x},s)$, it holds that \linebreak $|x_i|>|x_j|>|x_k|$.
\end{enumerate}
\end{lemma}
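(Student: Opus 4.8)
The plan is to prove each of the three parts by a direct case analysis according to whether $\|\mathbf{x}\|_0 \geq s$ or $\|\mathbf{x}\|_0 < s$. The single fact that drives everything is that when $\|\mathbf{x}\|_0 < s$ the $s$-th largest absolute value component vanishes, i.e., $M_s(\mathbf{x}) = 0$; this is immediate upon sorting $|x_1|,\dots,|x_n|$ in nonincreasing order, since the $s$-th entry of that list is $0$ whenever fewer than $s$ of the coordinates are nonzero.

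For the first part, if $\|\mathbf{x}\|_0 \geq s$ then the $s$-th largest absolute value is attained, so $I_=(\mathbf{x},s) = \{i : |x_i| = M_s(\mathbf{x})\}$ is nonempty and hence so is $I_\geq(\mathbf{x},s) \supseteq I_=(\mathbf{x},s)$. If instead $\|\mathbf{x}\|_0 < s$, then $M_s(\mathbf{x}) = 0$, so $I_>(\mathbf{x},s) = \{i : |x_i| > 0\} = I_1(\mathbf{x})$, which is nonempty exactly because $\mathbf{x} \neq \mathbf{0}$; again $I_\geq(\mathbf{x},s) \neq \emptyset$.

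For the second part, I would first note that when $\|\mathbf{x}\|_0 \geq s$ at least $s$ indices satisfy $|x_i| \geq M_s(\mathbf{x})$, so $|I_\geq(\mathbf{x},s)| \geq s$. Consequently the hypothesis $|I_\geq(\mathbf{x},s)| < s$ forces $\|\mathbf{x}\|_0 < s$, and in that regime the very definition of $I_<(\mathbf{x},s)$ reads $I_<(\mathbf{x},s) = \{i : x_i = 0\}$, so $x_j = 0$ for every $j \in I_<(\mathbf{x},s)$.

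For the third part, the only thing to observe is that $I_=(\mathbf{x},s)$ being nonempty already forces $\|\mathbf{x}\|_0 \geq s$ (otherwise $I_=(\mathbf{x},s) = \emptyset$ and there is nothing to prove). Then $|x_j| = M_s(\mathbf{x})$ for $j \in I_=(\mathbf{x},s)$, $|x_i| > M_s(\mathbf{x})$ for $i \in I_>(\mathbf{x},s)$, and $|x_k| < M_s(\mathbf{x})$ for $k \in I_<(\mathbf{x},s)$ directly from the definitions, and chaining these gives $|x_i| > |x_j| > |x_k|$. There is no real obstacle here; the only point requiring care throughout is the consistent handling of the degenerate regime $\|\mathbf{x}\|_0 < s$, where $M_s(\mathbf{x})$ collapses to $0$ and the definitions of $I_=(\mathbf{x},s)$ and $I_<(\mathbf{x},s)$ switch to their special-case forms — which is precisely what makes the first two parts go through.
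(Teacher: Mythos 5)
Your proof is correct. The paper itself omits the proof of this lemma, stating only that all three properties are ``rather simple consequences of the definition'' of the sets $I_{>}(\mathbf{x},s)$, $I_{=}(\mathbf{x},s)$, $I_{<}(\mathbf{x},s)$; your direct case analysis on $\|\mathbf{x}\|_0 \geq s$ versus $\|\mathbf{x}\|_0 < s$ (with the observation that $M_s(\mathbf{x})=0$ in the latter regime, and that part 3 is vacuous there since $I_{=}(\mathbf{x},s)=\emptyset$) is precisely the elementary argument the authors had in mind, and it fills in the omitted details correctly.
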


We will frequently use the notation
$$
R_s(\mbf{x}) := \lr{\{\}}{T:I_>(\mbf{x},s) \subseteq T\subseteq I_\geq(\mbf{x},s),|T| = \min\{s,|I_\geq(\mbf{x},s)|\}}
$$
for the set containing all the subsets of indices corresponding to the nonzero $s$ largest in absolute value components of a given vector $\mbf{x}$.
When $\|\mbf{x}\|_0\leq s$, there are no more than $s$ nonzero elements in $\mbf{x}$, and the above definition actually amounts to   $R_s(\mbf{x})= \{ I_1(\mbf{x})\}$. However, when $\|\mbf{x}\|_0> s$, there might be more than one set of indices corresponding to the $s$ largest absolute value components of $\mbf{x}$.  For example, consider the vector $\mbf{x} = (3,2,1,1,1,0,0)^T$ and the sparsity level $s=3$. Then,
$$
R_3 (\mbf{x}) = \lr{\{\}}{\{1,2,3\},\{1,2,4\},\{1,2,5\}}.
$$
On the other hand, in the following examples, the set contains a single subset:
$$ R_3((0,-5,4,-3,2,0)^T) = \{\{2,3,4\}\}, R_3 ((0,0,4,-3,0,0)^T) = \{\{3,4\}\}.$$
The \textit{hard thresholding} operator maps a vector $\mbf{x} \in \mathbb{R}^n$ to the set of vectors that are generated by keeping the $s$ largest absolute value components of $\mbf{x}$ and setting all the others to zeros. This operator, which we denote by $H_s$, is formally defined by
$$
H_s(\mbf{x}) := \bigcup_{T\in R_s(\mbf{x})} \lr{\{\}}{\mbf{y}:\mbf{y}_T=\mbf{x}_T,\mbf{y}_{\bar{T}}=\mbf{0}}.
$$
Thus, for example,
\begin{gather*}
 \hspace*{-7cm} H_3((3,2,1,1,1,0,0)^T) = \\ \{ (3,2,1,0,0,0,0)^T,(3,2,0,1,0,0,0)^T, (3,2,0,0,1,0,0)^T \}.
\end{gather*}

\subsection{Complete (co) - Stationarity}
\label{subsec:co_stat}
The first condition that we consider was presented for the sparse PCA problem in \cite{3-1}. We refer to it as the complete (co) stationarity condition.
\begin{definition}[\bf{co-stationarity}]
Let $\mbf{x}$ be a feasible solution of \eqref{P}. Then, $\mbf{x}$ is called a co-stationary point of \eqref{P} over $S$ if and only if it satisfies:
\begin{gather*}
    \langle \nabla f(\mbf{x}),\mbf{v}-\mbf{x}\rangle\leq 0 \qquad \forall \mbf{v}\in S.
\end{gather*}
\end{definition}
This is probably the most elementary first order condition for constrained differentiable optimization problems. The work \cite{3-1} provided a unified framework for several algorithms designed to solve different formulations of sparse PCA. Actually, \cite{3-1} considered the co-stationarity condition over a general nonempty and compact set instead of $S$, and for this general case, the following proposition, which was originally established in \cite{rockafellar1970}, was recalled. This result follows from the convexity of the objective function.
\begin{proposition}
Let $f:\mathbb{R}^n\rightarrow \mathbb{R}$ be a continuous differentiable and convex function over $\mathbb{R}^n$, and let $C$ be a nonempty and compact set. If $\mbf{x}$ is a global maximum of $f$ over $C$, then $\mbf{x}$ is a co-stationary point over $C$, meaning that $\langle \nabla f(\mbf{x}), \mbf{v}-\mbf{x}\rangle \leq 0$ for any $\mbf{v}\in C$.
\end{proposition}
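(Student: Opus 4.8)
The statement is the classical fact that a global maximum of a convex function over a compact set is a co-stationary point, and the plan is to give the short first-order argument. I would start from the differentiability of $f$: for any $\mbf{v} \in C$ and any $t \in (0,1]$, the point $\mbf{x} + t(\mbf{v}-\mbf{x})$ need not lie in $C$ in general, so convexity of $C$ cannot be invoked. Instead, the right move is to exploit convexity of $f$ directly, which is why the proposition is phrased for convex $f$ rather than for an arbitrary differentiable function.

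The key inequality is the gradient inequality for convex functions: for all $\mbf{y}, \mbf{x} \in \mathbb{R}^n$,
\begin{gather*}
f(\mbf{y}) \geq f(\mbf{x}) + \langle \nabla f(\mbf{x}), \mbf{y} - \mbf{x}\rangle.
\end{gather*}
Applying this with $\mbf{y} = \mbf{v}$ for an arbitrary $\mbf{v} \in C$ gives
\begin{gather*}
\langle \nabla f(\mbf{x}), \mbf{v} - \mbf{x}\rangle \leq f(\mbf{v}) - f(\mbf{x}).
\end{gather*}
Since $\mbf{x}$ is a global maximum of $f$ over $C$ and $\mbf{v} \in C$, we have $f(\mbf{v}) \leq f(\mbf{x})$, hence $f(\mbf{v}) - f(\mbf{x}) \leq 0$. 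Combining the two displays yields $\langle \nabla f(\mbf{x}), \mbf{v} - \mbf{x}\rangle \leq 0$ for every $\mbf{v} \in C$, which is exactly the co-stationarity condition.

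There is essentially no obstacle here: the nonemptiness and compactness of $C$ are used only to guarantee that the global maximum $\mbf{x}$ exists and lies in $C$ (so that $f(\mbf{x})$ is well-defined and finite and the comparison $f(\mbf{v}) \leq f(\mbf{x})$ is legitimate), and the convexity of $f$ supplies the gradient inequality that does all the work. I would note in passing that the result is sharp in the sense that convexity of $f$ cannot be dropped — for a merely differentiable $f$ one only obtains the weaker conclusion via the convexity of $C$, which $S$ lacks. This is precisely the motivation for seeking the stronger CW-maximality condition later in the paper.
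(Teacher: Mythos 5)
Your proof is correct and uses exactly the argument the paper alludes to: the paper gives no explicit proof, merely citing Rockafellar and noting that "this result follows from the convexity of the objective function," which is precisely the gradient-inequality step you carry out. (One minor aside: since the proposition already hypothesizes that $\mbf{x}$ is a global maximum, compactness of $C$ is not actually needed for your argument.)
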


\subsection{CW-Maximality}
\label{subsec:cw_max}
The second necessary optimality condition that we will consider is coordinate-wise maximality. This optimality condition is in fact a type of a local optimality condition, stating that a given point $\mbf{x}$ is a minimizer over a neighbourhood consisting of all feasible points, that are different by at most two coordinates.
We will denote the corresponding neighbourhood by
\begin{gather*}
S_2(\mbf{x}) := \{\mbf{z}:\|\mbf{z}-\mbf{x}\|_0\leq 2,\mbf{z}\in S\}.
\end{gather*}
The formal definition of a CW-maximum point follows.
\begin{definition}[\bf{CW-maximum point}] \label{definition:CW}
Let $\mbf{x}$ be a feasible solution of \eqref{P}. Then, $\mbf{x}$ is called a coordinate-wise (CW) maximum point of \eqref{P} if and only if \linebreak $f(\mbf{x})\geq f(\mbf{z})$ for every $\mbf{z}\in S_2(\mbf{x})$.
\end{definition}
Obviously, CW-maximality, by its definition, is a necessary optimality condition.
\begin{proposition}
\label{prop:Opt_vs_CW}
Let $\mbf{x}$ be an optimal solution to \eqref{P}. Then, $\mbf{x}$ is an CW-maximum point.
\end{proposition}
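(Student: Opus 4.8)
The plan is to observe that this proposition is an immediate consequence of the definitions, so the proof will be a single short paragraph. The key point is the set inclusion $S_2(\mbf{x}) \subseteq S$: indeed, by the very definition $S_2(\mbf{x}) = \{\mbf{z} : \|\mbf{z}-\mbf{x}\|_0 \leq 2,\ \mbf{z}\in S\}$, every element of $S_2(\mbf{x})$ is in particular an element of $S$. (One may also note that $\mbf{x}$ itself lies in $S_2(\mbf{x})$, since $\|\mbf{x}-\mbf{x}\|_0 = 0 \leq 2$ and $\mbf{x}\in S$ by feasibility, so $S_2(\mbf{x})$ is a nonempty subset of $S$, though this is not needed for the argument.)

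The second and final step is to invoke optimality: since $\mbf{x}$ is an optimal solution of \eqref{P}, we have $f(\mbf{x}) \geq f(\mbf{y})$ for every $\mbf{y}\in S$. Restricting this inequality to the subset $S_2(\mbf{x})$ yields $f(\mbf{x}) \geq f(\mbf{z})$ for every $\mbf{z}\in S_2(\mbf{x})$, which is exactly the condition in Definition \ref{definition:CW} for $\mbf{x}$ to be a CW-maximum point.

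There is no real obstacle here; the content of the statement is entirely in having set up the neighbourhood $S_2(\mbf{x})$ so that it sits inside the feasible set $S$, after which the implication is automatic. The proposition is stated mainly to record that CW-maximality is a genuine \emph{necessary} optimality condition, placing it (together with co-stationarity) in the hierarchy that Section \ref{sec:Nec_Opt_Cond_Hierarchy} will analyze; the substantive work — comparing CW-maximality with co-stationarity — is deferred to that section.
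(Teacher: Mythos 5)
Your proof is correct and is exactly the observation the paper relies on: the paper gives no explicit proof, remarking only that CW-maximality is ``obviously'' necessary by its definition, which is precisely your point that $S_2(\mbf{x})\subseteq S$ so global optimality over $S$ implies optimality over the neighbourhood. Nothing is missing.
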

 Instead of considering the neighbourhood $S_2(\mbf{x})$ in the definition of \linebreak CW-maximality (Definition \ref{definition:CW}), we could have alternatively considered \linebreak larger neighbourhoods consisting of vectors that differ from $\mbf{x}$ by at most $k$ coordinates for some $2 \leq k \leq s$:
\begin{gather*}
S_k(\mbf{x}) := \{\mbf{z}:\|\mbf{z}-\mbf{x}\|_0\leq k,\mbf{z}\in S\}.
\end{gather*}
 A similar optimality condition over such a neighbourhood can be defined, and clearly since $S_t(\mbf{x})\subseteq S_k(\mbf{x})$ for any $t\leq k$, considering neighbourhoods that differ by a larger amount of coordinates will result in stronger optimality conditions. Note that the amount of comparisons required in order to verify that a vector \mbox{$\mbf{x}\in \mathbb{R}^n$} with a full support ($I_1(\mbf{x})=s$) is  CW-maximal ($k=2$) is $O(s\cdot n)$, while changing the neighbourhood to $S_3$ will increase the amount of comparisons to $O(s\cdot n^2)$. Hence, considering such a stronger optimality condition has a substantial computational price. Keeping in mind that we seek scalable conditions and algorithms, we restrict the discussion to the case $k=2$.

\section{Optimality Conditions Hierarchy}
\label{sec:Nec_Opt_Cond_Hierarchy}

Our main result in this section is that CW-maximality is a stronger (that is, more restrictive) optimality condition than co-stationarity. This result also has an impact on the performance of the corresponding algorithms in the sense that, loosely speaking,  algorithms that are only guaranteed to converge to a co-stationary point are less likely to produce the optimal solution of the problem than algorithms that are guaranteed to converge to a CW-maximal point.
In Section \ref{sec:Num_Res}, we will show that the numerical results support this assertion.
\subsection{Technical Preliminaries}
\label{subsec:tech_pre}
We will begin by providing some auxiliary technical results that will be used in order to establish the main result. Lemma \ref{thrm:QCLP} is a trivial result, that follows directly from the Cauchy-Schwarz inequality (see also Lemma 4.1 in \cite{3-1}).

\begin{lemma}
\label{thrm:QCLP}
Suppose that $\mbf{0} \neq \mbf{q} \in \mathbb{R}^d$ and $\rho>0$. Then, the optimal solution of the optimization problem
\begin{equation}
\label{eq:QCLP}
\tag{QCLP}
\max_{\inR{x}{d}}\{\mbf{q}^T\mbf{x}:\|\mbf{x}\|_2 \leq \rho\},
\end{equation}
 is given by $\mbf{x}^* = \rho \frac{\mbf{q}}{\|\mbf{q}\|_2}$ with the optimal value of $\rho \|\mbf{q}\|_2$.
\end{lemma}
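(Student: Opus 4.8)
The plan is to prove this directly from the Cauchy--Schwarz inequality, since the feasible set is simply a Euclidean ball and the objective is linear. First I would establish the upper bound: for any feasible $\mbf{x}$, i.e. any $\mbf{x}$ with $\|\mbf{x}\|_2 \leq \rho$, Cauchy--Schwarz gives $\mbf{q}^T\mbf{x} \leq \|\mbf{q}\|_2\,\|\mbf{x}\|_2 \leq \rho\|\mbf{q}\|_2$, where the last step uses feasibility together with $\|\mbf{q}\|_2 > 0$ (which holds since $\mbf{q} \neq \mbf{0}$). Hence $\rho\|\mbf{q}\|_2$ is an upper bound on the optimal value of \eqref{eq:QCLP}.

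Next I would exhibit the candidate $\mbf{x}^* = \rho\,\mbf{q}/\|\mbf{q}\|_2$ and verify two things: that it is feasible, since $\|\mbf{x}^*\|_2 = \rho\|\mbf{q}\|_2/\|\mbf{q}\|_2 = \rho \leq \rho$; and that it attains the bound, since $\mbf{q}^T\mbf{x}^* = \rho\,\mbf{q}^T\mbf{q}/\|\mbf{q}\|_2 = \rho\|\mbf{q}\|_2^2/\|\mbf{q}\|_2 = \rho\|\mbf{q}\|_2$. Combining this with the upper bound shows that $\mbf{x}^*$ is an optimal solution and that the optimal value equals $\rho\|\mbf{q}\|_2$.

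If uniqueness of the maximizer is wanted (the statement says ``the optimal solution''), I would add a short argument: equality in Cauchy--Schwarz forces any maximizer $\mbf{x}$ to satisfy $\mbf{x} = \lambda\mbf{q}$ for some $\lambda \geq 0$, and then $\mbf{q}^T\mbf{x} = \lambda\|\mbf{q}\|_2^2$ is maximized over $\|\mbf{x}\|_2 = |\lambda|\,\|\mbf{q}\|_2 \leq \rho$ precisely when $\lambda = \rho/\|\mbf{q}\|_2$, giving $\mbf{x} = \mbf{x}^*$. There is essentially no real obstacle here — the result is elementary, exactly as the text indicates — so the only point requiring any care is making the uniqueness step precise by invoking the equality case of Cauchy--Schwarz rather than just the inequality.
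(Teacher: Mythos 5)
Your proof is correct and follows exactly the route the paper indicates: the paper gives no explicit proof, stating only that the lemma ``follows directly from the Cauchy--Schwarz inequality,'' which is precisely your argument (upper bound via Cauchy--Schwarz, verification that $\mbf{x}^* = \rho\,\mbf{q}/\|\mbf{q}\|_2$ attains it, uniqueness from the equality case). Nothing further is needed.
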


The following simple lemma is an extension of Proposition 4.3 from \cite{3-1}.


\begin{lemma}
\label{thrm:S-QCLP}
Assume that $\mbf{0} \neq \mbf{p}\in \mathbb{R}^n$. Then, the set of optimal solutions of the optimization problem
\begin{equation}
\max_{\inR{x}{n}} \{\mbf{p}^T\mbf{x}:\|\mbf{x}\|_0 \leq s,\|\mbf{x}\|_2 \leq 1\},
\tag{S-QCLP}
\label{eq:S-QCLP}
\end{equation}
is given by
\begin{gather*}
\mbf{X}^*(\mbf{p},s) := \lr{\{\}}{\frac{\mbf{x}}{\|\mbf{x}\|_2}: \mbf{x}\in H_s(\mbf{p})},
\end{gather*}
with the optimal value of $\|\mbf{p}_T\|_2$, where $T\in R_s(\mbf{p})$.
\end{lemma}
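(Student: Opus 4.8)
The plan is to reduce \eqref{eq:S-QCLP} to a finite family of continuous, quadratically constrained linear programs---one for each admissible support set---and then invoke Lemma~\ref{thrm:QCLP}. First I would observe that every feasible $\mbf{x}$ for \eqref{eq:S-QCLP} is supported on some $T$ with $|T|\le s$, and that, for a fixed such $T$, maximizing $\mbf{p}^T\mbf{x}$ subject to $I_1(\mbf{x})\subseteq T$ and $\|\mbf{x}\|_2\le1$ is exactly problem \eqref{eq:QCLP} in the variables $\mbf{x}_T$ with $\rho=1$; by Lemma~\ref{thrm:QCLP} its optimal value is $\|\mbf{p}_T\|_2$ when $\mbf{p}_T\neq\mbf{0}$, and it is trivially $0=\|\mbf{p}_T\|_2$ otherwise. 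Hence the optimal value of \eqref{eq:S-QCLP} equals $\nu:=\max_{|T|\le s}\|\mbf{p}_T\|_2$. Since $\|\mbf{p}_T\|_2^2=\sum_{i\in T}p_i^2$, this maximum is attained precisely by the index sets consisting of $s$ largest-in-absolute-value entries of $\mbf{p}$, i.e. by $T\in R_s(\mbf{p})$; all such sets share the same multiset of absolute values, so $\nu=\|\mbf{p}_T\|_2$ for every $T\in R_s(\mbf{p})$, and $\nu>0$ because $\mbf{p}\neq\mbf{0}$.

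With the value identified, one inclusion is immediate: if $\mbf{x}\in H_s(\mbf{p})$, say $\mbf{x}_T=\mbf{p}_T$ and $\mbf{x}_{\bar{T}}=\mbf{0}$ for some $T\in R_s(\mbf{p})$, then $\mbf{x}/\|\mbf{x}\|_2$ is feasible for \eqref{eq:S-QCLP} and its objective value is $\mbf{p}_T^T\mbf{p}_T/\|\mbf{p}_T\|_2=\|\mbf{p}_T\|_2=\nu$, so $\mbf{x}/\|\mbf{x}\|_2$ is optimal; thus every element of $\mbf{X}^*(\mbf{p},s)$ is an optimal solution. For the converse, let $\mbf{x}^*$ be an optimal solution and set $U:=I_1(\mbf{x}^*)$, so $|U|\le s$. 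Since $\mbf{p}^T\mbf{x}^*=\mbf{p}_U^T\mbf{x}^*_U$ and $\nu>0$, we must have $\mbf{p}_U\neq\mbf{0}$; then the chain $\nu=\mbf{p}_U^T\mbf{x}^*_U\le\|\mbf{p}_U\|_2\,\|\mbf{x}^*_U\|_2\le\|\mbf{p}_U\|_2\le\nu$, together with the equality case of the Cauchy--Schwarz inequality and $\|\mbf{x}^*_U\|_2=1$, forces $\|\mbf{p}_U\|_2=\nu$ and $\mbf{x}^*_U=\mbf{p}_U/\|\mbf{p}_U\|_2$. In particular $p_i\neq0$ for every $i\in U$, so $U\subseteq I_1(\mbf{p})$.

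It remains to prove $U\in R_s(\mbf{p})$, which is the \emph{crux} of the argument and amounts to a combinatorial exchange argument. If $\|\mbf{p}\|_0\le s$, then $\nu^2=\|\mbf{p}\|_2^2$, and $\sum_{i\in U}p_i^2=\nu^2=\sum_{i\in I_1(\mbf{p})}p_i^2$ with $U\subseteq I_1(\mbf{p})$ and all terms positive forces $U=I_1(\mbf{p})$, the unique element of $R_s(\mbf{p})$. Suppose now $\|\mbf{p}\|_0>s$. If $|U|<s$, then adjoining to $U$ any index in $I_1(\mbf{p})\setminus U$ (nonempty since $\|\mbf{p}\|_0>s>|U|$) yields a set $T$ with $|T|\le s$ and $\|\mbf{p}_T\|_2>\|\mbf{p}_U\|_2=\nu$, contradicting the definition of $\nu$; hence $|U|=s$. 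If some $j\in I_>(\mbf{p},s)$ were missing from $U$, then, since $|U|=s>|I_>(\mbf{p},s)|$, $U$ contains some $k\notin I_>(\mbf{p},s)$, i.e. $|p_k|\le M_s(\mbf{p})<|p_j|$, and replacing $k$ by $j$ strictly increases $\|\mbf{p}_T\|_2$ above $\nu$---a contradiction; so $I_>(\mbf{p},s)\subseteq U$. Likewise, replacing an element $k\in U\setminus I_\geq(\mbf{p},s)$ (should one exist) by an element $j\in I_\geq(\mbf{p},s)\setminus U$ (which exists because $|I_\geq(\mbf{p},s)|\ge s=|U|$) would again strictly increase $\|\mbf{p}_T\|_2$, so $U\subseteq I_\geq(\mbf{p},s)$. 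Combined with $|U|=s=\min\{s,|I_\geq(\mbf{p},s)|\}$, this gives $U\in R_s(\mbf{p})$. Finally, defining $\mbf{y}$ by $\mbf{y}_U=\mbf{p}_U$ and $\mbf{y}_{\bar{U}}=\mbf{0}$, we get $\mbf{y}\in H_s(\mbf{p})$ and $\mbf{x}^*=\mbf{y}/\|\mbf{y}\|_2\in\mbf{X}^*(\mbf{p},s)$, which completes the proof. The main obstacle is precisely this last, purely combinatorial, step: carefully running the exchange arguments against the definitions of $M_s(\mbf{p})$, $I_>(\mbf{p},s)$, $I_=(\mbf{p},s)$ and $I_\geq(\mbf{p},s)$, while keeping the degenerate case $\|\mbf{p}\|_0<s$ (where $R_s(\mbf{p})=\{I_1(\mbf{p})\}$) separate from the generic one.
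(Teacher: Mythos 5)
Your proof is correct and follows essentially the same route as the paper's: decompose \eqref{eq:S-QCLP} over admissible supports, solve each restricted problem via Lemma~\ref{thrm:QCLP}, and identify the optimal supports as exactly the sets in $R_s(\mbf{p})$. The only difference is one of detail --- you spell out, via the Cauchy--Schwarz equality case and an exchange argument, the step the paper dismisses as obvious, namely that the maximizers of $\|\mbf{p}_T\|_2$ over $|T|\le s$ are precisely the members of $R_s(\mbf{p})$ and that every optimal solution of \eqref{eq:S-QCLP} arises from one of them.
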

\begin{proof}
We can write \eqref{eq:S-QCLP} as
\begin{equation}
\label{341}
\max_{\begin{scriptsize}\begin{matrix}T\subseteq \{1,\dots,n\}\\|T|\leq s\end{matrix}\end{scriptsize}} \max_{\inR{x}{n}} \lr{\{\}}{\mbf{p}^T\mbf{x}:\|\mbf{x}\|_2\leq 1, I_1(\mbf{x})\subseteq T}.
\end{equation}
According to Lemma \ref{thrm:QCLP}, for each $T\subseteq \{1,\dots,n\}$ satisfying  $|T|\leq s$, the optimal value of the inner optimization problem is $\|\mbf{p}_T\|_2$, and if $\mbf{p}_T \neq \mbf{0}$, then a solution $\mbf{x}^*$ to the inner optimization problem is given by
\begin{equation} \label{345}
\mbf{x}^*_T=\frac{\mbf{p}_T}{\|\mbf{p}_T\|_2},~\mbf{x}^*_{\bar{T}} = \mbf{0}.
\end{equation}
 The problem (\ref{341}) thus reduces to
\begin{equation}
\label{350}
\max_{\begin{scriptsize}\begin{matrix}T\subseteq \{1,\dots,n\}\\|T|\leq s\end{matrix}\end{scriptsize}} \|\mbf{p}_T\|_2.
\end{equation}
Obviously, when $\|\mbf{p}\|_0 \geq s$, the optimal solutions of the latter problem are all the sets containing the indices of components corresponding to the $s$ largest absolute values in $\mbf{p}$, and when $\|\mbf{p}\|_0 <s$, the unique optimal solution is $I_1(\mbf{p})$. Thus, the set of all optimal solutions of (\ref{350}) is $R_s(\mbf{p})$. Noting that $\mbf{p}_T \neq \mbf{0}$ for any $T \in R_s(\mbf{p})$, we conclude that the optimal solutions of \eqref{eq:S-QCLP} are given by (\ref{345}) with $T$ being any set in $R_s(\mbf{p})$, which are exactly the members of $X^*(\mbf{p},s)$.
\qed
\end{proof}

Our final technical lemma states that, if a given vector $\tilde{\mbf{x}}$ is \textit{not} an optimal solution of the problem of maximizing a linear function over the unit norm, then there must be two indices $i \neq j$ for which the subvector $\tilde{\mbf{x}}_{\{i,j\}}$ is also \textit{not} an optimal solution for the problem restricted to the the variables $x_i,x_j$ (while fixing all the other variables). This lemma is rather simple, but will play a key role in the proof of the main result.

\begin{lemma}
\label{thrm:2-QCLP}
 Let $\mbf{q} \in \mathbb{R}^d$ and $\rho>0$. Suppose that $\tilde{\mbf{x}}$ satisfies $\|\tilde{\mbf{x}}\|_2\leq \rho$, and that it is \underline{not} an optimal solution of \eqref{eq:QCLP}. Then, there exist indices $i,j (i \neq j)$ such that $\tilde{\mbf{x}}_{\{i,j\}}$ is \underline{not} the optimal solution of

\begin{equation}
\max_{\mbf{x}_{\{i,j\}}\in \mathbb{R}^2} \left\{ \mbf{q}_{\{i,j\}}^T\mbf{x}_{\{i,j\}}:\|\mbf{x}_{\{i,j\}}\|_2 \leq
\Big(\rho^2 - \sum_{l\neq i,j}\tilde{x}_l^2\Big)^{1/2}\right\}.
\tag{$\text{2-QCLP}_{\{i,j\}}$}
\label{eq:2-QCLP}
\end{equation}
\end{lemma}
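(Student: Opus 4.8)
The plan is to argue by contrapositive on the coordinate level: assume that for \emph{every} pair $i \neq j$ the subvector $\tilde{\mbf{x}}_{\{i,j\}}$ \emph{is} optimal for \eqref{eq:2-QCLP}, and deduce that $\tilde{\mbf{x}}$ must then be optimal for \eqref{eq:QCLP}, contradicting the hypothesis. By Lemma \ref{thrm:QCLP}, optimality for \eqref{eq:2-QCLP} with $\mbf{q}_{\{i,j\}} \neq \mbf{0}$ forces $\tilde{\mbf{x}}_{\{i,j\}}$ to be a positive multiple of $\mbf{q}_{\{i,j\}}$ and to have norm exactly $(\rho^2 - \sum_{l\neq i,j}\tilde{x}_l^2)^{1/2}$; in particular $\|\tilde{\mbf x}\|_2 = \rho$. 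The first substantive step is to handle the degenerate cases separately: if $\mbf{q} = \mbf{0}$ every feasible point is optimal for \eqref{eq:QCLP}, so $\tilde{\mbf x}$ would be optimal, a contradiction; if $\|\tilde{\mbf x}\|_2 < \rho$, pick any $i,j$ with $q_i \neq 0$ or $q_j \neq 0$ (possible since $\mbf{q}\neq\mbf{0}$) — then the 2-dimensional constraint radius is strictly positive, and $\tilde{\mbf x}_{\{i,j\}}$ cannot be optimal there unless it already equals the Cauchy–Schwarz optimizer, which I will show is incompatible with $\|\tilde{\mbf x}\|_2<\rho$.

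For the main case $\mbf{q}\neq\mbf{0}$ and $\|\tilde{\mbf x}\|_2=\rho$, the strategy is to show that the pairwise optimality conditions glue into the global optimality condition $\tilde{\mbf x} = \rho\,\mbf{q}/\|\mbf{q}\|_2$. Fix any index $k$ with $q_k \neq 0$. For any other index $m$, applying the pairwise optimality to the pair $\{k,m\}$ and Lemma \ref{thrm:QCLP} gives $\tilde{\mbf x}_{\{k,m\}} = \lambda_{km} \mbf{q}_{\{k,m\}}$ for some $\lambda_{km} > 0$; comparing the $k$-th entry across all $m$ shows $\lambda_{km} = \tilde{x}_k/q_k =: \lambda$ is independent of $m$, hence $\tilde{x}_m = \lambda q_m$ for all $m$, i.e. $\tilde{\mbf x} = \lambda \mbf{q}$. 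Since $\|\tilde{\mbf x}\|_2 = \rho$ and $\lambda>0$, we get $\lambda = \rho/\|\mbf{q}\|_2$, so $\tilde{\mbf x} = \rho\,\mbf{q}/\|\mbf{q}\|_2$, which by Lemma \ref{thrm:QCLP} is the optimal solution of \eqref{eq:QCLP} — the desired contradiction.

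The main obstacle is bookkeeping rather than depth: I must be careful when some entries of $\mbf{q}$ vanish. If $q_i = q_j = 0$ for a pair, then \eqref{eq:2-QCLP} has objective identically zero and \emph{every} feasible point is optimal, so such pairs give no information — this is why I anchor the argument at an index $k$ with $q_k \neq 0$ and why the degenerate-case preprocessing (ruling out $\mbf{q}=\mbf{0}$ and $\|\tilde{\mbf x}\|_2<\rho$ up front) is essential. A second subtlety: to conclude $\lambda > 0$ in the pairwise relation $\tilde{\mbf x}_{\{k,m\}} = \lambda_{km}\mbf{q}_{\{k,m\}}$, I need $\mbf{q}_{\{k,m\}} \neq \mbf{0}$, which holds because $q_k \neq 0$; Lemma \ref{thrm:QCLP} then applies verbatim and pins down both the direction and the magnitude. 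Once these cases are cleanly separated, the gluing step is routine.
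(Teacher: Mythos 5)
Your proposal is correct and follows essentially the same route as the paper's proof: rule out $\mbf{q}=\mbf{0}$, dispose of the case $\|\tilde{\mbf{x}}\|_2<\rho$ by noting the pairwise optimum must make its norm constraint active, and in the case $\|\tilde{\mbf{x}}\|_2=\rho$ glue the pairwise optimality conditions (anchored at an index with $q_k\neq 0$) into $\tilde{\mbf{x}}=\rho\,\mbf{q}/\|\mbf{q}\|_2$, contradicting non-optimality. The only difference is cosmetic: you express the gluing via proportionality $\tilde{\mbf{x}}_{\{k,m\}}=\lambda_{km}\mbf{q}_{\{k,m\}}$ with a common scalar, whereas the paper first forces $\tilde{x}_j=0$ for $j$ with $q_j=0$ and then derives the ratio identity $q_j^2\tilde{x}_i^2=q_i^2\tilde{x}_j^2$ and sums; both yield the same conclusion.
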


\begin{proof} Since $\tilde{\mbf{x}}$ is not the optimal solution of \eqref{eq:QCLP}, we obtain that $\mbf{q}\neq \mbf{0}$ (since otherwise, if $\mbf{q}=\mbf{0}$, all feasible points are also optimal). Thus, the set $I_1(\mbf{q})$ is nonempty. We will split the analysis into two cases.
\begin{itemize}
\item If $\|\tilde{\mbf{x}}\|_2 < \rho$, then take any $i \in I_1(\mbf{q})$ and $j\neq i$, and we can write
\begin{gather*}
\|\tilde{\mbf{x}}_{\{i,j\}}\|_2 < \lr{()}{\rho^2 - \sum_{l\neq i,j}\tilde{x}_l^2}^{1/2},
\end{gather*}
which together with $\mbf{q}_{\{i,j\}} \neq \mbf{0}$ (since $i\in I_1(\mbf{q})$) implies that $\tilde{\mbf{x}}_{\{i,j\}}$ is not the optimal solution of \eqref{eq:2-QCLP}, since we have, by Lemma \ref{thrm:QCLP}, that the constraint at the optimal solution must be active.
\item If, on the other hand, $\|\tilde{\mbf{x}}\|_2 = \rho$, then assume in contradiction that for  each $i \neq j$ the vector $\tilde{\mbf{x}}_{\{i,j\}}$ is the optimal solution of \eqref{eq:2-QCLP}. Take some $i \in I_1(\mbf{q})$.  For any $j\in I_0(\mbf{q})$, we know that $\tilde{\mbf{x}}_{\{i,j\}}$ is the optimal solution of \eqref{eq:2-QCLP} and thus, according to Lemma \ref{thrm:QCLP} (employed on the problem \eqref{eq:2-QCLP}), it must in particular satisfy $\tilde{x}_j =0$, that is, $j \in I_0(\tilde{\mbf{x}})$. To summarize,
    \begin{equation} \label{parta} \tilde{x}_j=0 \mbox{ for any } j \in I_0(\mbf{q}). \end{equation}
 Now, for  any $j\in I_1(\mbf{q})$, according to Lemma \ref{thrm:QCLP}, $\tilde{\mbf{x}}_{\{i,j\}}$ must satisfy
\begin{equation}
\tilde{x}_i=\frac{q_i}{\|(q_i,q_j)^T\|_2}(\tilde{x}_i^2+\tilde{x}_j^2)^{1/2},
\label{eq:p_2-QCLP}
\end{equation}
where here we used the fact that $\rho^2-\sum_{l \neq i,j} \tilde{x}_l^2  = \tilde{x}_i^2+\tilde{x}_j^2$. Squaring both sides of \eqref{eq:p_2-QCLP}, we obtain that it is equivalent to $q_j^2 \tilde{x}_i^2 = q_i^2 \tilde{x}_j^2$,
and hence
$$
\tilde{x}_j^2 = \frac{q_j^2}{q_i^2}\tilde{x}_i^2  \qquad \mbox{ for any }j\in I_1( \mbf{q}).
$$
By (\ref{parta}), $\tilde{x}_j=0$ whenever $j \in I_0( \mbf{q})$, and we can therefore write
$$
\tilde{x}_j^2 = \frac{q_j^2}{q_i^2}\tilde{x}_i^2, \qquad   j=1,2,\ldots,n.
$$
Summing over $j=1,2,\ldots,n$, and using the fact that $\|\tilde{\mbf{x}}\|_2^2 = \rho^2$, it follows that
$$\sum_{j=1}^{n} \tilde{x}_i^2\frac{q_j^2}{q_i^2} = \rho^2,$$
implying that
$$
\tilde{x}_i^2 = \rho^2\frac{q_i^2}{\| \mbf{q}\|_2^2},
$$
which combined with the fact that that  $\text{sgn}(\tilde{x}_i)=\text{sgn}(q_i)$ (see \eqref{eq:p_2-QCLP} ), yields
$$
\tilde{x}_i = \rho\frac{q_i}{\|\mbf{q}\|_2}.
$$
Since we actually proved the latter for an arbitrary $i \in I_1(\mbf{q})$, and since $\tilde{x}_i = 0$ for any $i \in I_0(\mbf{q})$ (see (\ref{parta})), it follows that $$ \mbf{x} = \rho \frac{\mbf{q}}{\| \mbf{q}\|_2},$$
in contradiction to the assumption that $\tilde{\mbf{x}}$ is not an optimal solution of \eqref{eq:QCLP}.
\end{itemize} \qed
\end{proof}

The following corollary is a direct consequence of Lemmas \ref{thrm:S-QCLP} and \ref{thrm:2-QCLP}.

\begin{corollary}
\label{col:P_ij_sparse}
Let $\tilde{\mbf{x}}\in S$. If $\tilde{\mbf{x}}$ is \underline{not} an optimal solution to \eqref{eq:S-QCLP} and \linebreak $I_1(\tilde{\mbf{x}}) \subseteq T$ for some $T \in R_s(\mbf{p})$, then there exist indices $i,j \in T (i \neq j)$ such that $\tilde{\mbf{x}}_{\{i,j\}}$ is \underline{not} an optimal solution of \eqref{eq:2-QCLP}.
\end{corollary}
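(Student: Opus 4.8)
The plan is to combine the characterization of optimal solutions to \eqref{eq:S-QCLP} from Lemma \ref{thrm:S-QCLP} with the two-coordinate obstruction supplied by Lemma \ref{thrm:2-QCLP}, the bridge between the two being the observation that once we restrict attention to a coordinate set $T \in R_s(\mbf{p})$ containing $I_1(\tilde{\mbf{x}})$, the sparse problem \eqref{eq:S-QCLP} collapses into an ordinary norm-constrained linear maximization on $\mathbb{R}^{|T|}$ of the type treated by Lemma \ref{thrm:QCLP}. So first I would set $\mbf{q} := \mbf{p}_T$ and $\rho := 1$, and consider the problem $\max\{\mbf{q}^T\mbf{y} : \|\mbf{y}\|_2 \le 1, \mbf{y} \in \mathbb{R}^{|T|}\}$, which is exactly \eqref{eq:QCLP} posed on the variables indexed by $T$.

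The key claim to establish is that $\tilde{\mbf{x}}_T$ is \emph{not} an optimal solution of this $|T|$-dimensional \eqref{eq:QCLP}. Indeed, by Lemma \ref{thrm:S-QCLP} the optimal value of \eqref{eq:S-QCLP} is $\|\mbf{p}_T\|_2 = \|\mbf{q}\|_2$ (for any $T \in R_s(\mbf{p})$), attained precisely at the normalized hard-thresholded vectors; and by Lemma \ref{thrm:QCLP} the optimal value of the $|T|$-dimensional \eqref{eq:QCLP} is also $\|\mbf{q}\|_2$. Since $\tilde{\mbf{x}}$ is feasible for \eqref{eq:S-QCLP} with support inside $T$, the vector $\tilde{\mbf{x}}_T$ is feasible for the $|T|$-dimensional \eqref{eq:QCLP}, and its objective value $\mbf{q}^T\tilde{\mbf{x}}_T = \mbf{p}^T\tilde{\mbf{x}}$ equals the \eqref{eq:S-QCLP} objective at $\tilde{\mbf{x}}$, which is strictly less than $\|\mbf{q}\|_2$ because $\tilde{\mbf{x}}$ is assumed non-optimal for \eqref{eq:S-QCLP}. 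Hence $\tilde{\mbf{x}}_T$ fails to attain the optimal value of the $|T|$-dimensional \eqref{eq:QCLP}, which is what we wanted.

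Now apply Lemma \ref{thrm:2-QCLP} to this $|T|$-dimensional instance, with $\mbf{q} = \mbf{p}_T$, $\rho = 1$, and $\tilde{\mbf{x}}_T$ in the role of the non-optimal feasible point: it yields indices $i,j \in T$, $i \ne j$, such that $(\tilde{\mbf{x}}_T)_{\{i,j\}} = \tilde{\mbf{x}}_{\{i,j\}}$ is not the optimal solution of the two-variable problem with right-hand side $\big(1 - \sum_{l \in T,\, l \ne i,j} \tilde{x}_l^2\big)^{1/2}$. Finally I would note that, since $\tilde{x}_l = 0$ for every $l \notin T$ (because $I_1(\tilde{\mbf{x}}) \subseteq T$), we have $\sum_{l \in T,\, l \ne i,j} \tilde{x}_l^2 = \sum_{l \ne i,j} \tilde{x}_l^2$, so this two-variable problem is literally \eqref{eq:2-QCLP} as stated in the corollary, completing the argument.

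I do not anticipate a serious obstacle here; the only point requiring a little care is the bookkeeping that identifies the two-coordinate subproblem obtained from the $|T|$-dimensional reduction with the ambient $\eqref{eq:2-QCLP}$ — that is, checking that restricting the norm budget and then dropping the zero coordinates outside $T$ gives exactly the same constraint. The rest is just chaining the three lemmas in the right order.
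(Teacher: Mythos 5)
Your argument is correct and follows essentially the same route as the paper: restrict to the coordinates in $T$, show that $\tilde{\mbf{x}}_T$ is not optimal for the resulting $|T|$-dimensional instance of \eqref{eq:QCLP}, and then invoke Lemma \ref{thrm:2-QCLP}. The only (immaterial) difference is that you certify non-optimality of $\tilde{\mbf{x}}_T$ by comparing objective values against the optimal value $\|\mbf{p}_T\|_2$, whereas the paper compares $\tilde{\mbf{x}}_T$ against the unique optimizer $\mbf{p}_T/\|\mbf{p}_T\|_2$; your explicit check that the zero coordinates outside $T$ make the two-variable subproblem coincide with \eqref{eq:2-QCLP} is a detail the paper leaves implicit.
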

\begin{proof} Assume that $|T|=k$. Since $\tilde{\mbf{x}}$ is not an optimal solution of \eqref{eq:S-QCLP}, it follows by Lemma \ref{thrm:S-QCLP} that $\tilde{\mbf{x}}_T \neq \frac{\mbf{p}_T}{\|\mbf{p}_T\|}$, which implies that $\tilde{\mbf{x}}_T$ is not the optimal solution of the restricted problem
$$ \min_{\mbf{y} \in \mathbb{R}^k} \left \{ \mbf{p}_T^T \mbf{y} : \|\mbf{y}\|_2 \leq \rho \right \}.$$
Therefore, invoking  Lemma \ref{thrm:2-QCLP} with $d=k,\mbf{q} =\mbf{p}_T$, it follows that there exist indices $i,j \in T (i \neq j)$ such that $\tilde{\mbf{x}}_{i,j}$ is not an optimal solution of \eqref{eq:2-QCLP}.\qed
\end{proof}

\subsection{Co-Stationarity vs. CW-Maximality}
\label{subsec:co_stat_vs_cw_max}

The main result of this paper is given in the following theorem, which establishes the superiority of the CW-maximality condition over the co-stationarity condition.

\begin{theorem}
\label{thrm:CW_vs_Prop3.2}
Let $\mbf{x}$ be a CW-maximum point of problem \eqref{P}. Then, $\mbf{x}$ is a co-stationary point of \eqref{P}.
\end{theorem}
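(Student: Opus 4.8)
The plan is to argue by contraposition: I will assume that $\mbf{x} \in S$ is \emph{not} co-stationary and construct a feasible point $\mbf{z} \in S_2(\mbf{x})$ with $f(\mbf{z}) > f(\mbf{x})$, thereby showing $\mbf{x}$ is not CW-maximal. The starting observation is that co-stationarity of $\mbf{x}$ is equivalent to $\mbf{x}$ being an optimal solution of the linearized problem $\max_{\mbf{v} \in S} \langle \nabla f(\mbf{x}), \mbf{v} \rangle$, i.e. of an \eqref{eq:S-QCLP}-type problem with $\mbf{p} = \nabla f(\mbf{x})$. So if $\mbf{x}$ is not co-stationary, then $\mbf{x}$ is not optimal for \eqref{eq:S-QCLP} with this choice of $\mbf{p}$. (One should handle the degenerate case $\nabla f(\mbf{x}) = \mbf{0}$ separately; there $\mbf{x}$ is automatically co-stationary, so it does not arise.)

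Next I would pick a set $T \in R_s(\mbf{p})$ with $I_1(\mbf{x}) \subseteq T$ — such a $T$ exists when $\|\mbf{x}\|_0 < s$ trivially, and when $\|\mbf{x}\|_0 = s$ one takes $T = I_1(\mbf{x})$ provided $I_1(\mbf{x}) \in R_s(\mbf{p})$; the subtle point is that if $I_1(\mbf{x}) \notin R_s(\mbf{p})$ then in fact $\mbf{x}$ already fails a simpler swap test and one can produce an improving two-coordinate move directly by bringing in a heavier gradient coordinate. Assuming $I_1(\mbf{x}) \subseteq T$ for some $T \in R_s(\mbf{p})$, Corollary \ref{col:P_ij_sparse} gives indices $i \neq j$ in $T$ such that $\tilde{\mbf{x}}_{\{i,j\}}$ is not optimal for the restricted problem \eqref{eq:2-QCLP}. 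Concretely this yields a vector $\mbf{w} \in \mathbb{R}^2$ with $\|\mbf{w}\|_2^2 \le \rho^2 - \sum_{l \neq i,j}\tilde{x}_l^2$ and $\mbf{q}_{\{i,j\}}^T \mbf{w} > \mbf{q}_{\{i,j\}}^T \tilde{\mbf{x}}_{\{i,j\}}$, where $\mbf{q} = \mbf{p}_T$. Let $\mbf{z}$ agree with $\mbf{x}$ outside $\{i,j\}$ and equal $\mbf{w}$ on $\{i,j\}$; then $\|\mbf{z}\|_2 \le 1$, $\mbf{z}$ changes at most two coordinates of $\mbf{x}$, and $\mbf{z}$ keeps support inside $T$ so $\|\mbf{z}\|_0 \le |T| \le s$. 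Hence $\mbf{z} \in S_2(\mbf{x})$, and by construction $\langle \nabla f(\mbf{x}), \mbf{z} - \mbf{x} \rangle = \mbf{q}_{\{i,j\}}^T(\mbf{w} - \tilde{\mbf{x}}_{\{i,j\}}) > 0$.

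The last step is to upgrade the strict first-order gain $\langle \nabla f(\mbf{x}), \mbf{z} - \mbf{x}\rangle > 0$ to a strict gain in $f$ itself. Here convexity of $f$ does the work: by the gradient inequality, $f(\mbf{z}) \ge f(\mbf{x}) + \langle \nabla f(\mbf{x}), \mbf{z} - \mbf{x}\rangle > f(\mbf{x})$. This contradicts CW-maximality of $\mbf{x}$, completing the contrapositive argument. I expect the main obstacle to be the bookkeeping around which set $T$ to choose when $\|\mbf{x}\|_0 = s$ but $I_1(\mbf{x})$ is \emph{not} among the $s$ largest-magnitude coordinates of $\nabla f(\mbf{x})$: in that case there is a zero coordinate of $\mbf{x}$ whose gradient entry strictly exceeds the magnitude at some active coordinate, and one must verify directly that swapping these two coordinates (a two-coordinate change staying in $S$) strictly increases the linear functional, and therefore $f$. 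Everything else is an assembly of Lemma \ref{thrm:S-QCLP}, Lemma \ref{thrm:2-QCLP}, Corollary \ref{col:P_ij_sparse}, and the gradient inequality.
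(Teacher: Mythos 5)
Your proposal follows the same architecture as the paper's proof: translate failure of co-stationarity into non-optimality of $\mbf{x}$ for \eqref{eq:S-QCLP} with $\mbf{p}=\nabla f(\mbf{x})$, invoke Corollary \ref{col:P_ij_sparse} when $I_1(\mbf{x})\subseteq T$ for some $T\in R_s(\mbf{p})$ to get an improving two-coordinate change, and upgrade the strict linear gain to a strict gain in $f$ via the gradient inequality. That part is correct and is exactly the paper's Case 2.1 plus its concluding convexity step.

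The gap is in the complementary case, which you identify but do not resolve correctly. First, your claim that a suitable $T$ exists ``trivially'' when $\|\mbf{x}\|_0<s$ is false: every $T\in R_s(\mbf{p})$ satisfies $T\subseteq I_{\geq}(\mbf{p},s)$ and $|T|=\min\{s,|I_{\geq}(\mbf{p},s)|\}$, so for instance $\mbf{p}=(3,2,1)^T$, $s=2$, $\mbf{x}=(0,0,c)^T$ gives $R_2(\mbf{p})=\{\{1,2\}\}$ and $I_1(\mbf{x})=\{3\}$ is contained in no admissible $T$ even though $\|\mbf{x}\|_0<s$. Second, and more substantively, your proposed fix for the exceptional case, namely finding a \emph{zero} coordinate of $\mbf{x}$ whose gradient magnitude strictly exceeds that of some active coordinate and swapping, does not always exist. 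Take $f(\mbf{y})=y_1$, $n=3$, $s=2$, $\mbf{x}=(1/\sqrt{2},1/\sqrt{2},0)^T$: then $\nabla f(\mbf{x})=(1,0,0)^T$, $R_2(\nabla f(\mbf{x}))=\{\{1\}\}$ does not contain $I_1(\mbf{x})=\{1,2\}$, and the only zero coordinate of $\mbf{x}$ has zero gradient entry, so no swap of your type is available. The point is nonetheless not CW-maximal; the improving move is a \emph{consolidation} onto an already-active coordinate, $\mbf{z}=(1,0,0)^T$. The paper handles precisely this in its subcase 1.1, where $|I_{\geq}(\nabla f(\mbf{x}),s)|<s$ forces $\nabla_h f(\mbf{x})=0$ for the offending active index $h$, and the move sets $z_l=\text{sgn}(\nabla_l f(\mbf{x}))(x_h^2+x_l^2)^{1/2}$, $z_h=0$ for some $l\in I_{\geq}(\nabla f(\mbf{x}),s)$ that need not lie in $I_0(\mbf{x})$. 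Your sketch has no mechanism producing such a move, so the contrapositive is not established in that case; to repair it you need the paper's fuller case split (its Cases 1.1, 1.2 and 2.2, together with the preliminary sign-flip argument for indices with $\nabla_l f(\mbf{x})\cdot x_l<0$).
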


\begin{proof}
Let $\mbf{x}$ be a CW-maximum point of \eqref{P}. Assume by contradiction that $\mbf{x}$ is not a co-stationary point. This means that there exists a vector $\mbf{v}\in S$ such that
\begin{gather}
\nabla f(\mbf{x})^T (\mbf{v}-\mbf{x}) > 0.
\label{eq:CW_Prop_1}
\end{gather}
We will show that we can find a vector $\mbf{z}\in S_2(\mbf{x})$ such that
\begin{gather}
\nabla f(\mbf{x})^T (\mbf{z}-\mbf{x}) > 0.
\label{eq:CW_Prop_2}
\end{gather}
This will imply a contradiction to the CW-maximality of $\mbf{x}$ by the following simple argument: since $f$ is a convex function, we have
\begin{gather*}
f(\mbf{z}) \geq f(\mbf{x}) + \nabla f(\mbf{x})^T(\mbf{z}-\mbf{x}),
\end{gather*}
which combined with  \eqref{eq:CW_Prop_2} implies that
\begin{gather*}
f(\mbf{z}) > f(\mbf{x}),
\end{gather*}
which is an obvious contradiction to the CW-maximality of $\mbf{x}$.\\
\indent Since $\mbf{x}$ satisfies \eqref{eq:CW_Prop_1}, we obviously have $\nabla f(\mbf{x})\neq \mbf{0}$.
Let $X^*(\nabla f(\mbf{x}),s)$ be the set of optimal solutions of \eqref{eq:S-QCLP} with $\mbf{p}=\nabla f(\mbf{x})$ and let \linebreak $\mbf{x}^*\in X^*(\nabla f(\mbf{x}),s)$ be some particular solution. Then,
\begin{gather*}
\nabla f(\mbf{x})^T\mbf{x}^* \geq \nabla f(\mbf{x})^T \mbf{v} > \nabla f(\mbf{x})^T \mbf{x},
\end{gather*}
and thus $\mbf{x}\notin X^*(\nabla f(\mbf{x}),s)$.\\
Suppose that there exists some $l$ for which $\nabla_l f(\mbf{x})\cdot x_l < 0$ (and in particular $l\in I_1(\mbf{x})$). Define $\mbf{z}$ as:
\begin{gather*}
j = 1,\dots,n \qquad
z_j := \left\{ \begin{array}{ll}
-x_l, & j=l, \\
x_j, & otherwise.
\end{array} \right.
\end{gather*}
$\mbf{z}\in S_2(\mbf{x})$ and $\nabla f(\mbf{x})^T (\mbf{z}-\mbf{x}) > 0$ since
\begin{gather*}
\nabla f(\mbf{x})^T(\mbf{z}-\mbf{x}) = -2\cdot \nabla_l f(\mbf{x})\cdot x_l > 0.
\end{gather*}
We have thus shown in this case the desired contradiction.
From now on , we will therefore consider the case where $\nabla_i f(\mbf{x})\cdot x_i \geq 0$ for all $i=1,\dots,n$.\\
Consider the following cases:
\renewcommand{\labelenumi}{\arabic{enumi}.}
\renewcommand{\labelenumii}{\arabic{enumi}.\arabic{enumii}.}
\begin{enumerate}
\item  $ I_1(\mbf{x}) \not\subseteq I_{\geq}(\nabla f(\mbf{x}),s)$. \\
Obviously, there is some $h\in I_1(\mbf{x})\cap I_<(\nabla f(\mbf{x}),s)$. \\
We will consider the following subcases:
\begin{enumerate}
\item If $|I_\geq(\nabla f(\mbf{x}),s)|< s$, then $\nabla_h f(\mbf{x})=0$ (by Lemma \ref{lma:S-QCLP}, part 2), and since \mbox{$\nabla f(\mbf{x})\neq \mbf{0}$}, we conclude, using Lemma \ref{lma:S-QCLP} (part 1), that there is some $l\in I_\geq(\nabla f(\mbf{x}),s)$. Define $\mbf{z}$ as:
\begin{gather*}
j = 1,\dots,n \qquad
z_j := \left\{ \begin{array}{cl}
\text{sgn}\left(\nabla_l f(\mbf{x})\right)\cdot(x_h^2+x_l^2)^{1/2} & j=l, \\
0  & j = h, \\
x_j & otherwise.
\end{array} \right.
\end{gather*}
Obviously $\mbf{z}\in S_2(\mbf{x})$, and in addition $\nabla f(\mbf{x})^T (\mbf{z}-\mbf{x}) > 0$ since
\begin{gather*}
\nabla f(\mbf{x})^T(\mbf{z}-\mbf{x}) = \hspace{8cm}\\
\begin{array}{lll}
\hspace{1.5cm}
&   & \nabla_l f(\mbf{x})\cdot \text{sgn}\left(\nabla_l f(\mbf{x})\right)\cdot(x_h^2+x_l^2)^{1/2}-\nabla_l f(\mbf{x})\cdot x_l\\
& = &  \big|\nabla_l f(\mbf{x})\big|\cdot(x_h^2+x_l^2)^{1/2}-\nabla_l f(\mbf{x})\cdot x_l
 \\
& = &  \big|\nabla_l f(\mbf{x})\big|\cdot(x_h^2+x_l^2)^{1/2}-\big|\nabla_l f(\mbf{x})\big|\cdot |x_l|
\qquad \left(\nabla_l f(\mbf{x})\cdot x_l\geq 0\right)\\
& = & \big|\nabla_l f(\mbf{x}) \big|\cdot  \left((x_h^2+x_l^2)^{1/2}-|x_l|\right) > 0
\qquad\quad \left(\nabla_l f(\mbf{x}) \neq 0,x_h \neq 0\right).
\end{array}
\end{gather*}
\item If $|I_\geq(\nabla f(\mbf{x}),s)|\geq s$, then there is some $l\in I_\geq(\nabla f(\mbf{x}),s)$ such that \mbox{$l \notin I_1(\mbf{x})$}. Otherwise $I_\geq(\nabla f(\mbf{x}),s)\subseteq I_1(\mbf{x})$, and since \mbox{$|I_\geq(\nabla f(\mbf{x}),s)|\geq s$} and \mbox{$|I_1 (\mbf{x})|\leq s$}, we have that $I_\geq(\nabla f(\mbf{x}),s)=I_1 (\mbf{x})$, contradicting our assumption that \mbox{$I_1(\mbf{x}) \not\subseteq I_\geq(\nabla f(\mbf{x}),s)$}. We will define $\mbf{z}$ as:
\begin{gather*}
j = 1,\dots,n \qquad
z_j := \left\{ \begin{array}{cl}
\text{sgn}\left(\nabla_l f(\mbf{x})\right)\cdot |x_h| & j=l, \\
0  & j = h, \\
x_j & otherwise.
\end{array} \right.
\end{gather*}
Clearly, $\mbf{z}\in S_2(\mbf{x})$. In addition, $\nabla f(\mbf{x})^T (\mbf{z}-\mbf{x}) > 0$ since:
\begin{gather*}
\begin{array}{lllc}
\nabla f(\mbf{x})^T(\mbf{z} - \mbf{x}) & = &
\nabla_l f(\mbf{x}) \cdot \text{sgn}\left(\nabla_l f(\mbf{x})\right)\cdot |x_h| \\
&&- \nabla_h f(\mbf{x})\cdot x_h & \\
& =& \big|\nabla_l f(\mbf{x})\big|\cdot |x_h| - \big|\nabla_h f(\mbf{x})\big|\cdot |x_h| & \left(\nabla_h f(\mbf{x})\cdot x_h\geq 0\right)\\
& = &
\left(\big|\nabla_l f(\mbf{x})\big| - \big|\nabla_h f(\mbf{x})\big|
\right)
\cdot |x_h| >  0, &
\end{array}
\end{gather*}
where the last inequality holds since $x_h\neq 0$ and the indices $l$ and $h$ are such that $l\in I_\geq(\nabla f(\mbf{x}),s)$ and $h\in I_<(\nabla f(\mbf{x}),s)$, thus according to Lemma \ref{lma:S-QCLP} (part 3) $\big|\nabla_l f(\mbf{x})\big|>\big|\nabla_h f(\mbf{x})\big|$.
\end{enumerate}
\item $I_1(\mbf{x}) \subseteq I_\geq(\nabla f(\mbf{x}),s)$\\
Now we will consider the following subcases:
\begin{enumerate}
\item If $I_1(\mbf{x}) \subseteq T$ for some $T \in R_s(\nabla f(\mbf{x}))$, then since $\mbf{x} \notin X^*(\nabla f(\mbf{x}),s)$, it follows that according to Corollary \ref{col:P_ij_sparse}, there exist indices $h,l\in T$ such that
\begin{equation*}
\hat{\mbf{x}} := \argmax_{\inR{y}{2}} \lr{\{\}}{\nabla_{\{h,l\}} f(\mbf{x})^T \mbf{y}:\|\mbf{y}\|^2 \leq 1 - \sum_{i\neq h,l} x_i^2}
\end{equation*}
satisfies
\begin{gather}
\nabla_{\{h,l\}} f(\mbf{x})^T\hat{\mbf{x}} > \nabla_{\{h,l\}} f(\mbf{x})^T\mbf{x}_{\{h,l\}}.
\label{eq:2notopt}
\end{gather}
Since $|T|\leq s$ and $\|\hat{\mbf{x}}\|_2^2 \leq 1 - \sum_{i\neq h,l} x_i^2$, the vector
\begin{gather*}
j = 1,\dots,n \qquad
z_j := \left\{ \begin{array}{cl}
\hat{x}_1, & j=h, \\
\hat{x}_2, & j=l, \\
x_j, & otherwise,
\end{array} \right.
\end{gather*}
is in $S_2(\mbf{x})$, and satisfies by \eqref{eq:2notopt} that $\nabla f(\mbf{x})^T(\mbf{z}-\mbf{x})>0$.
\item If $I_1(\mbf{x}) \not\subseteq T$ for all $T \in R_s(\nabla f(\mbf{x}))$, then:
\begin{itemize}
\item Take $h\in I_1(\mbf{x})$ such that $h\notin T$ for some $T \in R_s(\nabla f(\mbf{x}))$. Since $I_1(\mbf{x})\subseteq I_{\geq}(\nabla f(\mbf{x}),s)$, it follows that $h\in I_{\geq}(\nabla f(\mbf{x}),s)$. Moreover, since  $I_{>}(\nabla f(\mbf{x}),s)\subseteq T$ and $h\notin T$, we have that $h\notin I_{>}(\nabla f(\mbf{x}),s)$, implying that $h\in I_{=}(\nabla f(\mbf{x}),s)$. Thus, $h\in I_{=}(\nabla f(\mbf{x}),s)\cap I_1(\mbf{x})$.
\item  $I_>(\nabla f(\mbf{x}),s)\not\subseteq I_1(\mbf{x})$. To show this, note that otherwise, \linebreak[4] $I_>(\nabla f(\mbf{x}),s)\subseteq I_1(\mbf{x})$, and since $I_1(\mbf{x}) \subseteq I_\geq(\nabla f(\mbf{x}),s)$ and \linebreak $|I_1(\mbf{x})|\leq s$, we obtain that  $| I_1(\mbf{x})|\leq \min\lr{\{\}}{s,|I_\geq(\nabla f(\mbf{x}),s)|}$, implying that $I_1(\mbf{x})\subseteq T$ for some $T\in R_s(\nabla f(\mbf{x}))$, in contradiction to our assumption.
Thus, there exists some $l\in I_>(\nabla f(\mbf{x}),s)$ such that $l\notin I_1(\mbf{x})$.
\end{itemize}
Define $\mbf{z}$ as:
\begin{gather*}
j = 1,\dots,n \qquad
z_j := \left\{ \begin{array}{cl}
\text{sgn}\left(\nabla_l f(\mbf{x})\right)\cdot |x_h|, & j=l, \\
0,  & j = h, \\
x_j, & otherwise.
\end{array} \right.
\end{gather*}
Clearly, $\mbf{z}\in S_2(\mbf{x})$. Furthermore, $\nabla f(\mbf{x})^T (\mbf{z}-\mbf{x}) > 0$ since
\begin{gather*}
\begin{array}{lllc}
\nabla f(\mbf{x})^T(\mbf{z} - \mbf{x}) & = &
\nabla_l f(\mbf{x})\cdot \text{sgn}\left(\nabla_l f(\mbf{x})\right)\cdot |x_h| & \\
&& - \nabla_h f(\mbf{x})\cdot x_h & \\
& =& \big|\nabla_l f(\mbf{x})\big|\cdot |x_h| - \big|\nabla_h f(\mbf{x})\big|\cdot |x_h| & \left(\nabla_h f(\mbf{x})\cdot x_h\geq 0\right)\\
& = &
\left(\big|\nabla_l f(\mbf{x})\big| - \big|\nabla_h f(\mbf{x})\big|
\right)
\cdot |x_h| > 0, &
\end{array}
\end{gather*}
where the last inequality holds since $x_h\neq 0$ and the indices $l$ and $h$ are such that $l\in I_>(\nabla f(\mbf{x}),s)$ and $h\in I_=(\nabla f(\mbf{x}),s)$, and thus according to Lemma \ref{lma:S-QCLP} (part 3) $\big|\nabla_l f(\mbf{x})\big|>\big|\nabla_h f(\mbf{x})\big|$.
\end{enumerate}
\end{enumerate}
We have thus arrived at a contradiction, and the desired implication is established. \qed
\end{proof}
In order to show that the reverse implication is not valid, that is, that co-stationary points are not necessarily CW-maximal points, we present an example of a problem instance and a co-stationary point, that is not a CW-maximal point.
\begin{example}
\label{exmpl:CW_vs_Prop3.2}
For any $n>s>0$, we consider problem \eqref{SPCA} with a diagonal matrix $\mbf{A}$, whose entries on the main diagonal are given by the vector $\mbf{a}$ defined by
\begin{gather*}
\mbf{a}:=\left(\begin{matrix} 2\cdot \mbf{1}_{n-s} \\ 0.5 \cdot \mbf{1}_{s} \end{matrix}\right),
\end{gather*}
where for a given positive integer $m$, $\mbf{1}_m$ and $\mbf{0}_m$ are the vectors of size $m$ with all entries equal to ones or zeros, respectively.
We also define
\begin{gather*}
\mbf{x}:=\left(\begin{matrix} \mbf{0}_{n-s} \\ s^{-0.5}\cdot \mbf{1}_{s} \end{matrix}\right) \qquad \text{and} \qquad
\tilde{\mbf{x}}:=\left(\begin{matrix} \mbf{0}_{n-s-1} \\ s^{-0.5} \\ 0\\ s^{-0.5}\cdot \mbf{1}_{s-1} \end{matrix}\right).
\end{gather*}
It easy to see that $\mbf{x},\tilde{\mbf{x}}\in S$ and that $\mbf{A}\succ {\mbf 0}$, since it is a diagonal matrix with positive diagonal elements.
The gradient of $f$ is given by:
\begin{gather*}
\nabla f(\mbf{x}) = 2\mbf{A}\mbf{x} = \left(\begin{matrix}
\mbf{0}_{n-s}\\
s^{-0.5}\cdot\mbf{1}_s
\end{matrix}\right).
\end{gather*}
For any $\mbf{v}\in S$:
\begin{gather*}
\langle \nabla f(\mbf{x}),\mbf{v} - \mbf{x} \rangle = \sum_{i=n-s+1}^{n} s^{-0.5}(v_i-s^{-0.5})  \\
= s^{-0.5} \left(\sum_{i=n-s+1}^{n} v_i -s^{0.5}\right) \leq s^{-0.5} \left(\|\mbf{v}\|_1 -s^{0.5}\right) \leq 0,
\end{gather*}
where the last inequality holds since $\|\mbf{v}\|_1\leq \sqrt{\|\mbf{v}\|_0}\|\mbf{v}\|_2 \leq\sqrt{s}$. Hence, $\mbf{x}$ is co-stationary. The vector $\tilde{\mbf{x}}$ satisfies $\tilde{\mbf{x}} \in S_2(\mbf{x})$ and since:
\begin{gather*}
\begin{array}{ll}
f(\tilde{\mbf{x}}) = \tilde{\mbf{x}}^T\mbf{A}\tilde{\mbf{x}} = (s-1)\cdot (2s)^{-1}+2s^{-1} &= (s+3)\cdot (2s)^{-1}\\
 &> s\cdot (2s)^{-1} = \mbf{x}^T\mbf{A}\mbf{x} = f(\mbf{x}),
\end{array}
\end{gather*}
it follows that $\mbf{x}$ is not a CW-maximum point.
\end{example}

\subsection{Support Optimality}
\label{subsec:SO}
Theorem \ref{thrm:CW_vs_Prop3.2} establishes the relationship between the two stationarity conditions considered up to this point:
co-stationarity and CW-maximality. A third condition, proposed in \cite{3-1_24}, that we will refer to as \emph{support optimality} (SO), is given in the following definition.
\begin{definition}[\bf{Support Optimality}]
\label{def:SO}
A vector $\mbf{x}^*\in S$ is called a {\bf support optimal (SO) point} of \eqref{P} with respect to an index set $T \subseteq \{1,2,\ldots,n\}$ if and only if it is an optimal solution of the optimization problem
\begin{equation}
\tag{SO}
\label{eq:def_SO}
\max_{\inR{x}{n}} \{ f(\mbf{x}):\|\mbf{x}\|_2 \leq 1,I_1(\mbf{x})\subseteq T\}.
\end{equation}
\end{definition}
It is clear that, if $\mbf{x} \in S$ is an optimal solution of problem \eqref{P}, then it must be an SO point of \eqref{P} with respect to any index set $T$ satisfying $|T| \leq s$ and $I_1(\mbf{x}) \subseteq T$. In that respect, support optimality is a necessary optimality condition for problem \eqref{P}. It is a remarkably weak condition and cannot be used exclusively to derive a reasonable algorithm. Nevertheless, it is not totally futile. In order to enhance the performance, the CW-based algorithms that will be presented in Section \ref{sec:Algos} will produce a sequence of SO points, and
in Section \ref{sec:Num_Res}, we will adopt the variational re-normalization strategy suggested in \cite{3-1_24}, stating that for each sparse solution obtained by any technique, it is reasonable to replace this solution with the SO point that correspond to the same support.  \\
We will conclude this section with an example that demonstrates the potential benefit of employing algorithms that produce a point that satisfies stronger necessary optimality conditions. Consider the pit-prop data, which consists of 13 variables measuring various physical properties of 180 pitprops. This data set was suggested originally in \cite{3-1_17_Jeffers1967}, and since then was extensively used as a benchmark example for sparse PCA; see, for example, \cite{3-1_17,3-1_18,3-1_24}. The problem has 13 variables and we consider a sparsity level of $s=4$. Note that we can list all the ${13 \choose 4}=715$ SO points that correspond to index sets with exactly $4$ indices, and the optimal solution  must be one of these 715 points. Out of this set of points, 28 satisfy the co-stationarity condition and only 2 satisfy the CW-maximality condition. The following table presents the support sets of each of the co-stationarity points along with their function values.

\begin{table}[H]
    \caption{The supports of the co-stationary points for the pit prop data.}
    \vspace{0.5cm}
    \begin{minipage}{.5\linewidth}
      \centering
        \begin{tabular}{|c|c|c|c|}
\hline
\# & Support & CW-maximum & Value
\\
\hline
1  & \{1,2,9,10\} 	& *	& 2.937 \\
2  & \{1,2,7,10\} 	&  	& 2.883 \\
3  & \{1,2,7,9\}	&  	& 2.859 \\
4  & \{1,2,8,9\}	&  	& 2.797 \\
5  & \{1,2,8,10\}	&  	& 2.759 \\
6  & \{1,2,6,7\}	&  	& 2.697 \\
7  & \{2,7,9,10\}	&  	& 2.696 \\
8  & \{2,6,7,10\}	&  	& 2.592 \\
9  & \{1,6,7,10\}	&  	& 2.587 \\
10 & \{1,2,3,4\}	& *	& 2.563 \\
11 & \{7,8,9,10\}	&  	& 2.549 \\
12 & \{6,7,9,10\}	&  	& 2.522 \\
13 & \{6,7,10,13\}	&  	& 2.459 \\
14 & \{6,7,8,10\}	&  	& 2.444 \\
\hline
        \end{tabular}
    \end{minipage}%
    \begin{minipage}{.5\linewidth}
      \centering
        \begin{tabular}{|c|c|c|c|}
\hline
\# & Support & CW-maximum & Value \\
\hline
15 & \{5,6,7,10\}	&  	& 2.337 \\
16 & \{7,8,10,12\}	&  	& 2.314 \\
17 & \{7,8,10,13\}	&  	& 2.302 \\
18 & \{5,6,7,13\}	&  	& 2.28 	\\
19 & \{3,4,6,7\}	&  	& 2.209 \\
20 & \{4,5,6,7\}	&  	& 2.196 \\
21 & \{7,10,12,13\}	&  	& 2.136 \\
22 & \{3,4,8,12\}  	&  	& 1.995 \\
23 & \{3,4,10,12\}	&  	& 1.992 \\
24 & \{3,10,11,12\}	&  	& 1.609 \\
25 & \{3,5,12,13\} 	&  	& 1.516 \\
26 & \{1,5,12,13\} 	&  	& 1.414 \\
27 & \{2,5,12,13\} 	&  	& 1.408 \\
28 & \{3,5,11,13\} 	&  	& 1.382 \\
\hline
        \end{tabular}
    \end{minipage}
\label{table:PitProp}
\end{table}
Since the number of CW-maximum points is significantly smaller than the number of co-stationary points, it is much more probable that the optimal solution will be found by an algorithm that produces CW-maximum points than an algorithm that produces co-stationary points.

\section{Algorithms}
\label{sec:Algos}

In this section, we will present two CW-based algorithms  -- GCW and PCW -- that are guaranteed to converge after a finite amount of iterations to a a CW-maxima. Later on, in Section \ref{sec:Num_Res}, we will demonstrate the superiority of these algorithm over methods which are based on the co-stationarity optimality condition such as the conditional gradient algorithm with unit step-size (ConGradU), that was suggested in \cite{3-1}, where it was also proven that  limit points of the sequence generated by ConGradU are co-stationary point.

In \cite{6-4} several algorithms that produce a CW-minimum point were considered. These block coordinate descent type algorithms perform at each iteration an optimization step with respect to one or two variables, while keeping the rest fixed. The coordinates that need to be altered are chosen to be the ones that produce the maximal decrease among all possible alternatives, or by applying an index selection strategy based on a local first order information. We adopt this approach and present similar algorithms for the sparse PCA problem.\\
At each iteration of a CW-based  algorithm applied to \eqref{P}, at most two variables will be updated. We can categorize each of the iterations according to whether the support is altered or not.
Block coordinate algorithms suffer from a major drawback -- a slow convergence rate. In order to reduce the effect of this displeasing characteristic, we will replace the point obtained at each step with an SO point that corresponds to the same support. This modification allows us to bypass the large amount of iterations that should have been devoted for optimizing the variables with respect to a fixed support. \\
\indent Below we present the Greedy CW (GCW) algorithm. We denote by $\mathcal{O}(T)$ an oracle that produce an SO point with respect to a given support $T$ by solving problem \eqref{eq:def_SO}. We will refer to this oracle as an \textit{SO oracle}. In the specific case of the PCA problem, the SO oracle amounts to finding a normalized principal eigenvector of a submatrix of the covariance matrix. However, finding the maximum of a general convex function $f$ over a unit ball is in principle a difficult task. We will assume that the solution produced by the oracle is uniquely defined by $T$. In addition, note that the oracle outputs an optimal solution of a problem consisting of maximizing a convex function over a compact and convex feasible set, and hence by \cite[Corollary 32.3.2]{rockafellar1970}, there exists an optimal solution of the problem which is an extreme point. In particular, this means that we can assume without any loss of generality that the oracle outputs a vector with norm $1$. This assumption will made from now on.

\begin{framed}
\noindent
{\bf The Greedy CW (GCW) Algorithm}\\
{\bf Input:}  $f:\mathbb{R}^n \rightarrow \mathbb{R}$ -- convex function; $\mathcal{O}(\cdot)$ -- SO oracle; $s$ -- sparsity level.\\
{\bf Output:} $\mbf{x}$ - a CW-maximum point of \eqref{SPCA}. \\
{\bf Initialization:} Take $T \in \lr{\{\}}{1,2,\dots,n}$ such that $1 \leq |T|\leq s$ and set \mbox{$\mbf{x}^0 = \mathcal{O}(T)$} and $k=0$.\\
{\bf General step:} 
\begin{enumerate}
\item \label{algo:itm:a1_itm2}  While  $\|\mbf{x}^k\|_0<s$, compute
\begin{equation*}
j_k\in \argmax_{j\in I_0(\mbf{x}^k)} \left\{f(\mbf{z}):\mbf{z}=\mathcal{O}(I_1(\mbf{x}^k)\cup \{j\}) \right\},
\end{equation*}
If  $f(\mathcal{O}(I_1(\mbf{x}^k)\cup\{j_k\}))>f(\mbf{x}^k)$, then set
\begin{eqnarray*}
 \mbf{x}^{k+1} &=& \mathcal{O}(I_1(\mbf{x}^k) \cup \{j_k\}),\\
k &=& k+1,
\end{eqnarray*}
\begin{itemize}
\item[] and return to \ref{algo:itm:a1_itm2}; otherwise, go to \ref{algo:itm:a1_itm3}.
\end{itemize}
\item \label{algo:itm:a1_itm3} For every $i\in I_1(\mbf{x}^k)$ and $j\in I_0(\mbf{x}^k)$ compute
$$
f_{i,j} = \max_{\sigma\in\{-1,1\}} \lr{\{\}}{f\lr{()}{\mbf{x}^k-x_i^k\mbf{e}_i+\sigma|x_i^k|\mbf{e}_j}}.
$$
Let $(i_k,j_k) = \argmax\lr{\{\}}{f_{i,j}:i\in I_1(\mbf{x}^k), j\in I_0(\mbf{x}^k)}$. If $f_{i_k,j_k}>f(\mbf{x}^k)$, then set
$$
\begin{aligned}
\mbf{x}^{k+1} &=&& \mathcal{O}\left (\left (I_1(\mbf{x}^k) \setminus \{i_k\}\right) \cup \{j_k\}\right),\\
k &=&& k+1,
\end{aligned}
$$
\begin{itemize}
\item[] and return to \ref{algo:itm:a1_itm2}.
\end{itemize}
Otherwise, STOP and set ${\mbf x} \leftarrow {\mbf x}^{k+1}$.
\end{enumerate}
\end{framed}

Step \ref{algo:itm:a1_itm2} of the GCW algorithm is in fact the greedy forward selection method proposed in \cite{3-1_24}. Hence, in some sense, the GCW method is a generalization of this method, that does not terminate at the moment that a solution with a full support is obtained. However, from a more practical point of view, this resemblance is irrelevant due to the fact that, if the initial support satisfies $|T|=s$, then the condition $\|\mbf{x}^k\|_0<s$ will probably be false for all $k$ in any reasonable practical scenario. \\
The following theorem summarizes the key properties of the GCW algorithm.
\begin{theorem}
	\label{thrm:GCW_properties}
	Let $\{\mbf{x}^k \}$ be the sequence generated by the GCW algorithm. Then, the following statements hold.
	\begin{enumerate}
		\item[(i)] The sequence of function values $\{f(\mbf{x}^k)\}$ is monotonically increasing.
		\item[(ii)] The algorithm terminates after a finite amount of iterations.
		\item[(iii)]  At termination, the algorithm produces a CW maximum point.
	\end{enumerate}
\end{theorem}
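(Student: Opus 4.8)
The plan is to dispose of (i) and (ii) quickly and put the work into (iii). The observation underlying everything is that every iterate $\mbf{x}^k$ has the form $\mathcal{O}(T)$ for some support $T$ with $1\le|T|\le s$: it holds for $\mbf{x}^0$ by the initialization, and each of the two update rules feeds such a set to the oracle, so it propagates by induction. Hence each $\mbf{x}^k$ satisfies $\|\mbf{x}^k\|_2=1$ and is an optimal solution of the corresponding \eqref{eq:def_SO} problem; since $I_1(\mbf{x}^k)$ is contained in the support used to produce $\mbf{x}^k$, it is in particular an SO point with respect to $I_1(\mbf{x}^k)$, i.e. $f(\mbf{y})\le f(\mbf{x}^k)$ for every feasible $\mbf{y}$ with $I_1(\mbf{y})\subseteq I_1(\mbf{x}^k)$.

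For (i), a Step 1 update is made only when $f(\mathcal{O}(I_1(\mbf{x}^k)\cup\{j_k\}))>f(\mbf{x}^k)$, so there monotonicity is built in. For a Step 2 update, the vector $\mbf{w}:=\mbf{x}^k-x^k_{i_k}\mbf{e}_{i_k}+\sigma^*|x^k_{i_k}|\mbf{e}_{j_k}$ attaining $f_{i_k,j_k}$ has the same Euclidean norm as $\mbf{x}^k$ (a component of magnitude $|x^k_{i_k}|$ is moved from position $i_k$ to position $j_k$) and support contained in $(I_1(\mbf{x}^k)\setminus\{i_k\})\cup\{j_k\}$, hence it is feasible for the \eqref{eq:def_SO} problem over that support; therefore $f(\mbf{x}^{k+1})=f(\mathcal{O}((I_1(\mbf{x}^k)\setminus\{i_k\})\cup\{j_k\}))\ge f(\mbf{w})=f_{i_k,j_k}>f(\mbf{x}^k)$. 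So $\{f(\mbf{x}^k)\}$ is strictly increasing. For (ii), the iterates all lie in the finite set $\{\mathcal{O}(T):T\subseteq\{1,\dots,n\},\ 1\le|T|\le s\}$, so $\{f(\mbf{x}^k)\}$ takes only finitely many values; a strictly increasing sequence taking finitely many values must be finite, which forces the algorithm to terminate.

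For (iii), let $\mbf{x}$ be the terminal iterate. The stopping situation supplies: $\mbf{x}$ is an SO point with respect to $I_1(\mbf{x})$ and $\|\mbf{x}\|_2=1$; if $\|\mbf{x}\|_0<s$ then Step 1 was run without improvement, so $f(\mathcal{O}(I_1(\mbf{x})\cup\{j\}))\le f(\mbf{x})$ for all $j\in I_0(\mbf{x})$; and Step 2 found no improvement, so $f(\mbf{x}-x_i\mbf{e}_i+\sigma|x_i|\mbf{e}_j)\le f(\mbf{x})$ for all $i\in I_1(\mbf{x})$, $j\in I_0(\mbf{x})$, $\sigma\in\{-1,1\}$. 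I would then take an arbitrary $\mbf{z}\in S_2(\mbf{x})$, put $D:=\{l:z_l\ne x_l\}$ (so $|D|\le2$), and distinguish cases by how $D$ intersects $I_1(\mbf{x})$ and $I_0(\mbf{x})$. If $D\subseteq I_1(\mbf{x})$, then $I_1(\mbf{z})\subseteq I_1(\mbf{x})$ and $\|\mbf{z}\|_2\le1$, so $f(\mbf{z})\le f(\mbf{x})$ by the SO-optimality of $\mbf{x}$. If $D$ meets $I_0(\mbf{x})$ in exactly one index $j$ while $\mbf{z}$ still uses every index of $I_1(\mbf{x})$, then $I_1(\mbf{z})=I_1(\mbf{x})\cup\{j\}$, which forces $\|\mbf{x}\|_0<s$, and $\mbf{z}$ is feasible for the \eqref{eq:def_SO} problem over $I_1(\mbf{x})\cup\{j\}$, so $f(\mbf{z})\le f(\mathcal{O}(I_1(\mbf{x})\cup\{j\}))\le f(\mbf{x})$. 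If $D=\{i,j\}$ with $i\in I_1(\mbf{x})$, $j\in I_0(\mbf{x})$ and $z_i=0$ (a genuine swap), then $\|\mbf{z}\|_2^2=1$ together with $z_i=0$ and the fact that $\mbf{z}$ agrees with $\mbf{x}$ off $\{i,j\}$ (and $x_j=0$) forces $z_j^2\le x_i^2$; hence, restricting $f$ to the plane $\{\mbf{x}+u\mbf{e}_i+v\mbf{e}_j\}$, the point representing $\mbf{z}$ lies on the segment between the points representing $\mbf{x}-x_i\mbf{e}_i\pm|x_i|\mbf{e}_j$, and convexity gives $f(\mbf{z})\le\max_{\sigma\in\{-1,1\}}f(\mbf{x}-x_i\mbf{e}_i+\sigma|x_i|\mbf{e}_j)\le f(\mbf{x})$. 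Finally, if $D$ consists of two indices of $I_0(\mbf{x})$, then $\|\mbf{z}\|_2^2=\|\mbf{x}\|_2^2+\sum_{l\in D}z_l^2>1$, contradicting $\mbf{z}\in S$, so that case is vacuous. Since these exhaust all possibilities, $f(\mbf{z})\le f(\mbf{x})$ for every $\mbf{z}\in S_2(\mbf{x})$, which is exactly Definition \ref{definition:CW}.

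The only genuinely non-mechanical point is the swap case: the algorithm inspects just the two extreme perturbations $\mbf{x}-x_i\mbf{e}_i\pm|x_i|\mbf{e}_j$, whereas CW-maximality demands optimality against every feasible change of $(x_i,x_j)$. Two facts together close the gap -- the unit-norm property $\|\mbf{x}\|_2=1$ pins the admissible value at coordinate $j$ to $[-|x_i|,|x_i|]$, and convexity of $f$ puts the worst case over that interval at an endpoint. The remaining bookkeeping is light; the only items needing attention are recognizing which subcases force $\|\mbf{x}\|_0<s$ (so that the Step 1 information can be used) and noting that $\|\mbf{x}\|_2=1$ rules out the ``two new coordinates'' case.
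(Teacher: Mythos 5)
Your proof is correct and follows essentially the same route as the paper's: the same partition of $S_2(\mbf{x})$ according to how the support of $\mbf{z}$ relates to that of $\mbf{x}$, SO-optimality for changes within the support, the Step-1 termination test for additions (noting this case forces $\|\mbf{x}\|_0<s$), and the convexity/extreme-point argument reducing an arbitrary swap to the two endpoint perturbations $\pm|x_i|$. (One cosmetic slip: in the swap case you only have $\|\mbf{z}\|_2^2\le 1$, not $=1$, but the needed inequality $z_j^2\le x_i^2$ follows just the same.)
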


\begin{proof}
	

Part $(i)$  follows immediately from the description of the GCW algorithm. Part $(ii)$ is a consequence of the monotonicity of the algorithm (part $(i)$)   and the fact that it only passes through SO points, from which there is only a finite number under the standing assumption that the solution produced by the oracle $\mathcal{O}(T)$ is uniquely defined by $T$.\\
To prove $(iii)$,
consider the following partition of $S_2(\mbf{x})$:
\begin{gather*}
\begin{array}{rl}
S_2(\mbf{x}) =& \{\mbf{z}:\|\mbf{z}-\mbf{x}\|_0\leq 2,\mbf{z}\in S\}\\
=& S_2^0(\mbf{x}) \cup S_2^1(\mbf{x}) \cup S_2^2(\mbf{x}),\\
\end{array}
\end{gather*}
where
\begin{gather*}
\begin{array}{rl}
S_2^0(\mbf{x}) =& \{\mbf{z} \in S:\|\mbf{z}-\mbf{x}\|_0\leq 2,I_1(\mbf{z})\subseteq I_1(\mbf{x}) \}\\
S_2^1(\mbf{x}) =& \{\mbf{z} \in S:\|\mbf{z}-\mbf{x}\|_0\leq 2,I_1(\mbf{z})=I_1(\mbf{x})\cup\{j\}, j\in I_0(\mbf{x})\} \\
S_2^2(\mbf{x}) = & \hspace{9cm}~
\end{array}\\
\qquad \{\mbf{z} \in S:\|\mbf{z}-\mbf{x}\|_0\leq 2,I_1(\mbf{z})=(I_1(\mbf{x}) \setminus \{i\}) \cup \{j\},i\in I_1(\mbf{x}),j\in I_0(\mbf{x}) \},
\end{gather*}
and assume that the algorithm produced the point $\bar{\mbf{x}}$. Since $\bar{\mbf{x}}$ is an SO point and $S_2^0(\bar{\mbf{x}})\subseteq \{\mbf{x}:\|\mbf{x}\|_2 \leq 1,I_1(\mbf{x})\subseteq I_1(\bar{\mbf{x}})\}$, it follows that $f(\bar{\mbf{x}})\geq f(\mbf{x})$ for any $\mbf{x}\in S_2^0(\bar{\mbf{x}})$. Now, note that the algorithm terminates only if after performing Step \ref{algo:itm:a1_itm3} we obtain that for any $i\in I_1(\bar{\mbf{x}})$ and $j\in I_0(\bar{\mbf{x}})$
\begin{gather*}
\def\arraystretch{1.5}
\begin{array}{ll}
f_{i,j} &=\max_{\sigma\in\{-1,1\}} \lr{\{\}}{f\lr{()}{\bar{\mbf{x}}-\bar{x}_i\mbf{e}_i+\sigma|\bar{x}_i|\mbf{e}_j}}\\
&= \max_{\alpha} \lr{\{\}}{f\lr{()}{\bar{\mbf{x}}-\bar{x}_i\mbf{e}_i+\alpha\mbf{e}_j}:\alpha\in [-|\bar{x}_i|,|\bar{x}_i|]}\\
&\leq f(\bar{\mbf{x}}),
\end{array}
\end{gather*}
where the first equality is due to the fact that the maximum of a convex function over a compact and convex set is attained at an extreme point, see \cite[Corolalry 32.3.2]{rockafellar1970}.  Thus, $f(\bar{\mbf{x}})\geq f(\mbf{x})$ for any $\mbf{x}\in S_2^2(\bar{\mbf{x}})$.
This is enough for proving that $\bar{\mbf{x}}$ is CW-maximal in the case when $\|\bar{\mbf{x}}\|_0=s$ since in this case $S_2^1(\bar{\mbf{x}})=\emptyset$. If $\|\bar{\mbf{x}}\|_0<s$, then prior to entering Step \ref{algo:itm:a1_itm3},  Step \ref{algo:itm:a1_itm2} must be performed. This step is terminated only if $f(\bar{\mbf{x}})\geq f(\mbf{x})$ for any $$\mbf{x}\in \{\mbf{z} \in S:I_1(\mbf{z})=I_1(\bar{\mbf{x}})\cup\{j\}, j\in I_0(\bar{\mbf{x}})\},$$ and since $S_2^1(\bar{\mbf{x}})\subseteq \{\mbf{z} \in S:I_1(\mbf{z})=I_1(\bar{\mbf{x}})\cup\{j\}, j\in I_0(\bar{\mbf{x}})\}$, it implies that $f(\bar{\mbf{x}})\geq f(\mbf{x})$ for any $\mbf{x}\in S_2^1(\bar{\mbf{x}})$, concluding that $f(\bar{\mbf{x}})\geq f(\mbf{x})$ for any \mbox{$\mbf{x}\in S_2(\bar{\mbf{x}})$}.
\qed	

\end{proof}

 Practically, if the initial support $T$ satisfies $|T|=s$, then most of the computation time in the GCW method is consumed in computing $f_{i,j}$ for each possible swap.
   This observation encourages us to consider the following variation of GCW, which we name \textit{the Partial CW (PCW)} algorithm.
\begin{framed}
\noindent
{\bf The Partial CW (PCW) Algorithm}\\
{\bf Input:}  $f:\mathbb{R}^n \rightarrow \mathbb{R}$ -- convex function; $\mathcal{O}(\cdot)$ -- SO oracle; $s$ -- sparsity level.\\
{\bf Output:} $\mbf{x}$ - a CW-maximum point of \eqref{SPCA}. \\
{\bf Initialization:} Take $T \in \lr{\{\}}{1,2,\dots,n}$ such that $1 \leq |T|\leq s$ and set \mbox{$\mbf{x}^0 = \mathcal{O}(T)$} and $k=0$.\\
{\bf General step:} 
\begin{enumerate}
\item \label{algo:itm:a2_itm2}  While  $\|\mbf{x}^k\|_0<s$,  compute
\begin{equation*}
j_k\in \argmax_{j\in I_0(\mbf{x}^k)} \left\{f(\mbf{z}):\mbf{z}=\mathcal{O}(I_1(\mbf{x}^k)\cup \{j\}) \right\},
\end{equation*}
If  $f(\mathcal{O}(I_1(\mbf{x}^k)\cup\{j_k\}))>f(\mbf{x}^k)$, then set
\begin{eqnarray*}
\mbf{x}^{k+1} & =& \mathcal{O}(I_1(\mbf{x}^k) \cup \{j_k\}),\\
k &=& k+1,
\end{eqnarray*}
\begin{itemize}
\item[] and return to \ref{algo:itm:a1_itm2}; otherwise, go to \ref{algo:itm:a2_itm3}.
\end{itemize}
\item \label{algo:itm:a2_itm3} Set $R=I_1(\mbf{x}^k)$.\\
While $|R|>0$
\begin{itemize}
\item[] Set $i_k \in \argmin\lr{\{\}}{|x^k_i|:i\in R}$ and for each $j\in I_0(\mbf{x}^k)$ compute
\begin{gather*}
f_{i_k,j} = \max_{\sigma\in\{-1,1\}} \lr{\{\}}{f\lr{()}{\mbf{x}^k-x_{i_k}^k\mbf{e}_{i_k}+\sigma|x_{i_k}^k|\mbf{e}_j}}.
\end{gather*}
Let $j_k \in \argmax\lr{\{\}}{f_{i_k,j}: j\in I_0(\mbf{x}^k)}$. \\
If $f_{i_k,j_k}>f(\mbf{x}^k)$, then set
\begin{eqnarray*}
 \mbf{x}^{k+1} &=&  \mathcal{O}\left (\left (I_1(\mbf{x}^k) \setminus \{i_k\}\right) \cup \{j_k\}\right),\\
k &=& k+1,
\end{eqnarray*}
\begin{itemize}
\item[] and return to \ref{algo:itm:a2_itm2}.
\end{itemize}
Otherwise, set $R=R\setminus \{i_k\}$.
\end{itemize}
STOP and set ${\mbf x} \leftarrow {\mbf x}^{k+1}$.
\end{enumerate}
\end{framed}

Before termination, PCW will perform the computation of all possible $f_{i,j}$, thus assuring the convergence to a CW-maximum point, given that the output is of a full support.
For the general step, the amount of computation will significantly decrease on the expense of finding the indices that provide the maximal increase in the function value. Nevertheless, the empirical study suggests that PCW provides similar results as GCW with respect to function values in a fraction of the time, as demonstrated in Section \ref{sec:Num_Res}.\\

\section{Numerical Results}
\label{sec:Num_Res}

We will illustrate the effectiveness of the algorithms proposed in the previous section on simulated and a gene expression datasets.
We compared the results with the following alternative algorithms: the novel $l_0$-constrained version of ConGradU \cite{3-1}, the expectation maximization \cite{3-1_29}, approximate greedy \cite{3-1_10} and thresholding \cite{3-1_7}. The MATLAB implementation of ConGradU was kindly provided by the authors, for all the other alternative algorithms we used a MATLAB implementation available on the authors' web-pages. For the thresholding algorithm and the algorithms proposed in this paper, we used a MATLAB implementation, which is available in the following URL:\\
\centerline{\url{http://tx.technion.ac.il/~yakovv/packages/CW_PCA.zip}}\\
Whenever an initialization is required, we set the initial point to be the solution of the thresholding method.  Regarding the output, we adopt the variational renormalization strategy suggested in \cite{3-1_24}. Hence, for each of the algorithms, we extracted the sparsity pattern (the set of indices of the nonzero elements). The actual output vector is determined  to be equal to $\mathcal{O}(T)$, where $T$ is the generated sparsity pattern. The experiments were conduced on a PC with a 3.40GHz processor with 16GB RAM.

\subsection{Random Data}
\label{subsec:rand_data}
The covariance matrix $\mbf{A}$ is given by $\mbf{A}=\mbf{D}^T \mbf{D}$, where $\mbf{D}$ is the so-called "data matrix". Each entry in the data matrix $\inR{D}{m\times n}$ was randomly generated according to the Gaussian distribution with zero mean and variance $1/m$ ($D_{i,j}\sim \mathcal{N}(0,1/m)$). We considered data matrices with $n=2000,5000,10,000$ and $50,000$ variables. The number of observations is set to $m=150$ for all matrices. The sparsity levels  considered are $s=5,10,\dots,250$, and for each sparsity level we generated $100$ realizations. We will measure the effectiveness of the algorithms according to the average proportion of variability explained by the algorithm with respect to the largest eigenvalue of the data covariance matrix (i.e., $\mbf{x}^T\mbf{A}\mbf{x}/\lambda_1(\mbf{A})$, where $\mbf{x}$ is the solution and $\lambda_1(\mbf{A})$ is the largest eigenvalue of $\mbf{A}$).

\subsubsection{GCW vs. PCW}
\label{subsubse:GCW_vs_PCW}
First, we would like to compare the effectiveness and performance of the CW-based algorithms proposed in the previous section: GCW and PCW. We conducted the comparison based on data matrices with $2,000$ variables and the
results are given in Figure \ref{NumRes:fig:G_Vs._P}.
\begin{figure}[H]
\leavevmode
\centering
\includegraphics[angle=0,width=0.45\textwidth]{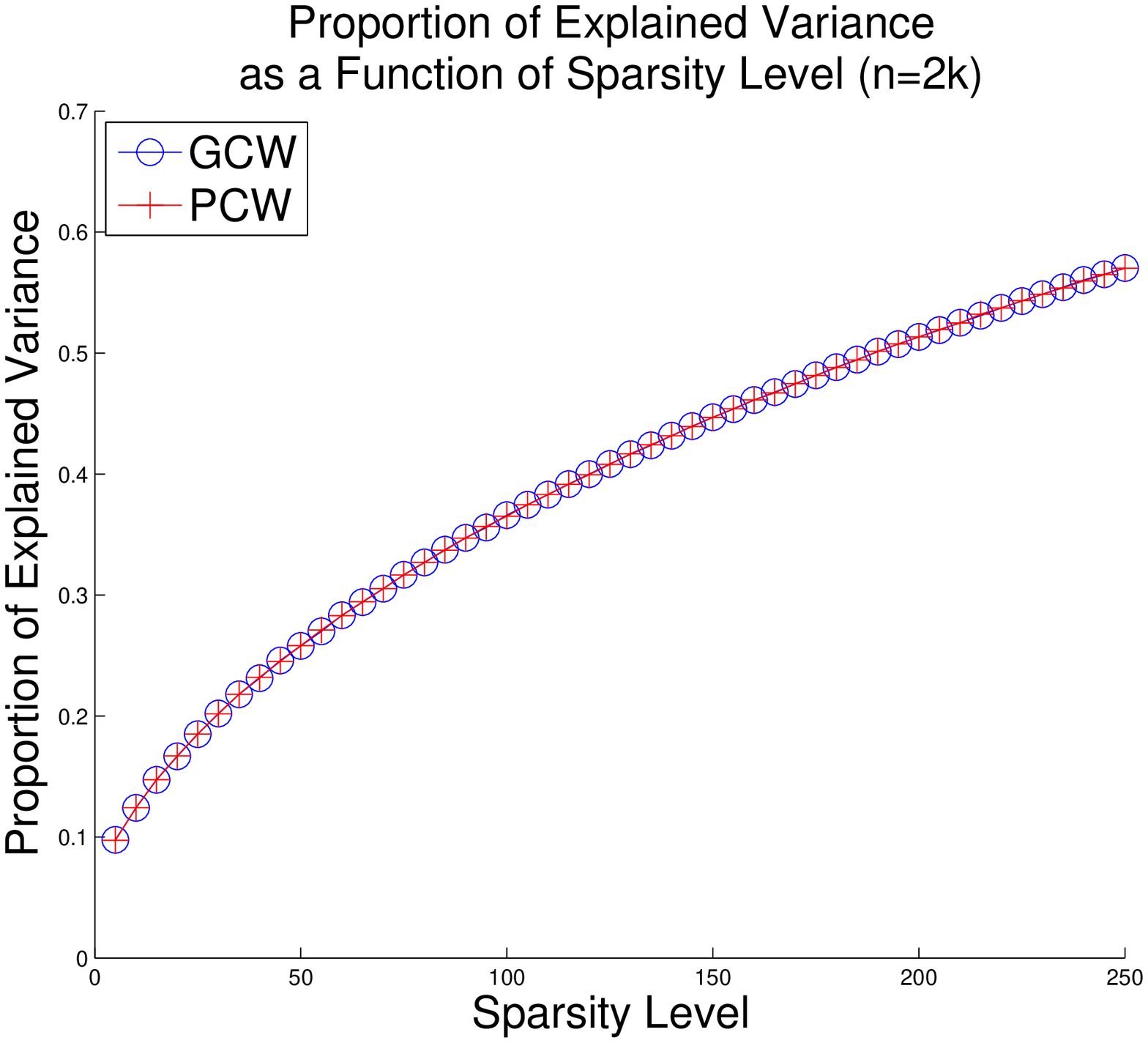}
\includegraphics[angle=0,width=0.45\textwidth]{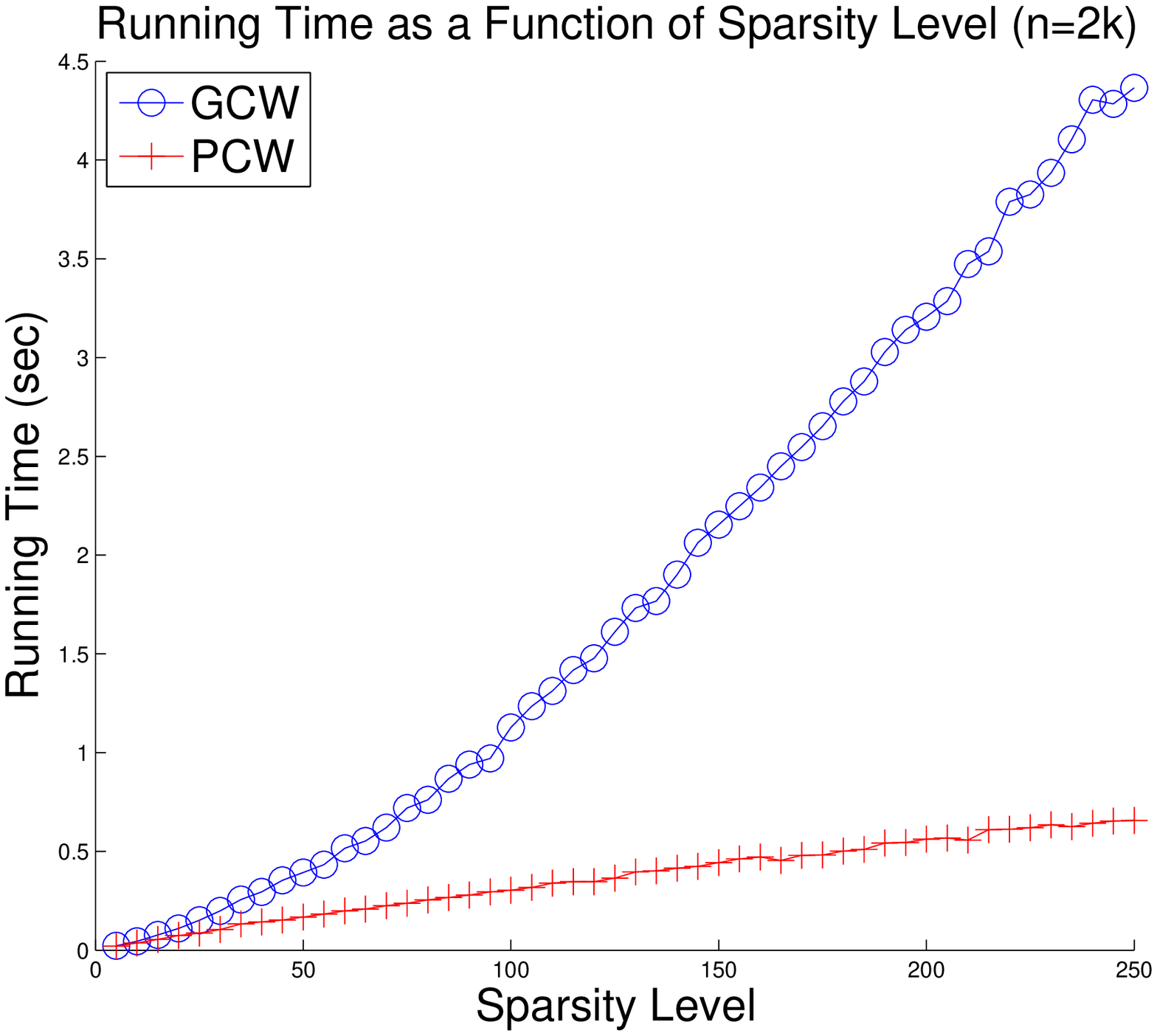}
\caption{\emph{GCW vs. PCW - The proportion of explained variability is given in the left figure and the computation time is given in the right one. The plot in both figures are given as a function of the sparsity level.}}
\label{NumRes:fig:G_Vs._P}
\end{figure}

We can clearly see that both methods achieve similar results with respect to the function values, while PCW achieves these results in a fraction of the time.
Thus, in the remaining numerical study we will omit GCW. Although the partial version remarkably reduces the computation time, it is still not competitive for  very large-scale problems when a full path of solutions is required. Thus, for such cases, we will also examine the effect of initializing PCW with the solution of the previous run (with the smaller sparsity level), and we will refer to such a continuation scheme as $\text{PCW}_{cont}$.

\subsubsection{PCW vs. Alternative Methods}
\label{subsubsec:PCW_vs_altern}
We will now compare the effectiveness and performance of PCW with respect to the alternative algorithms mentioned earlier. The setting for this set of experiments is the same as the one described in the previous example, but with problems with $n=5,000,~10,000$ and $50,000$ variables. Figure \ref{NumRes:fig:P_Vs._Ot_fval} provides the proportion of explained variability as a function of the sparsity level.

\begin{figure}[H]
\leavevmode
\centering
\includegraphics[angle=0,width=0.4\textwidth]{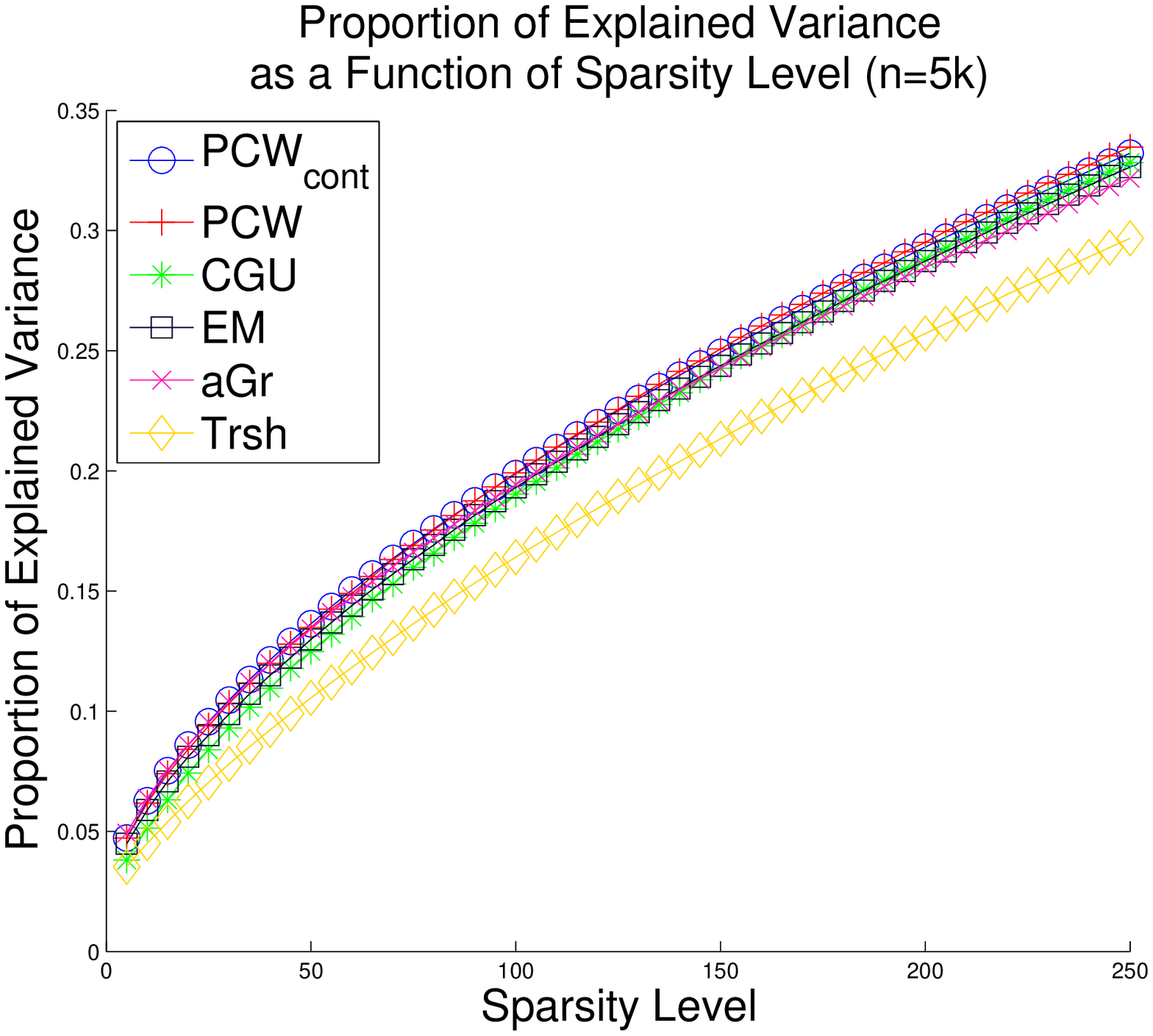}
\includegraphics[angle=0,width=0.4\textwidth]{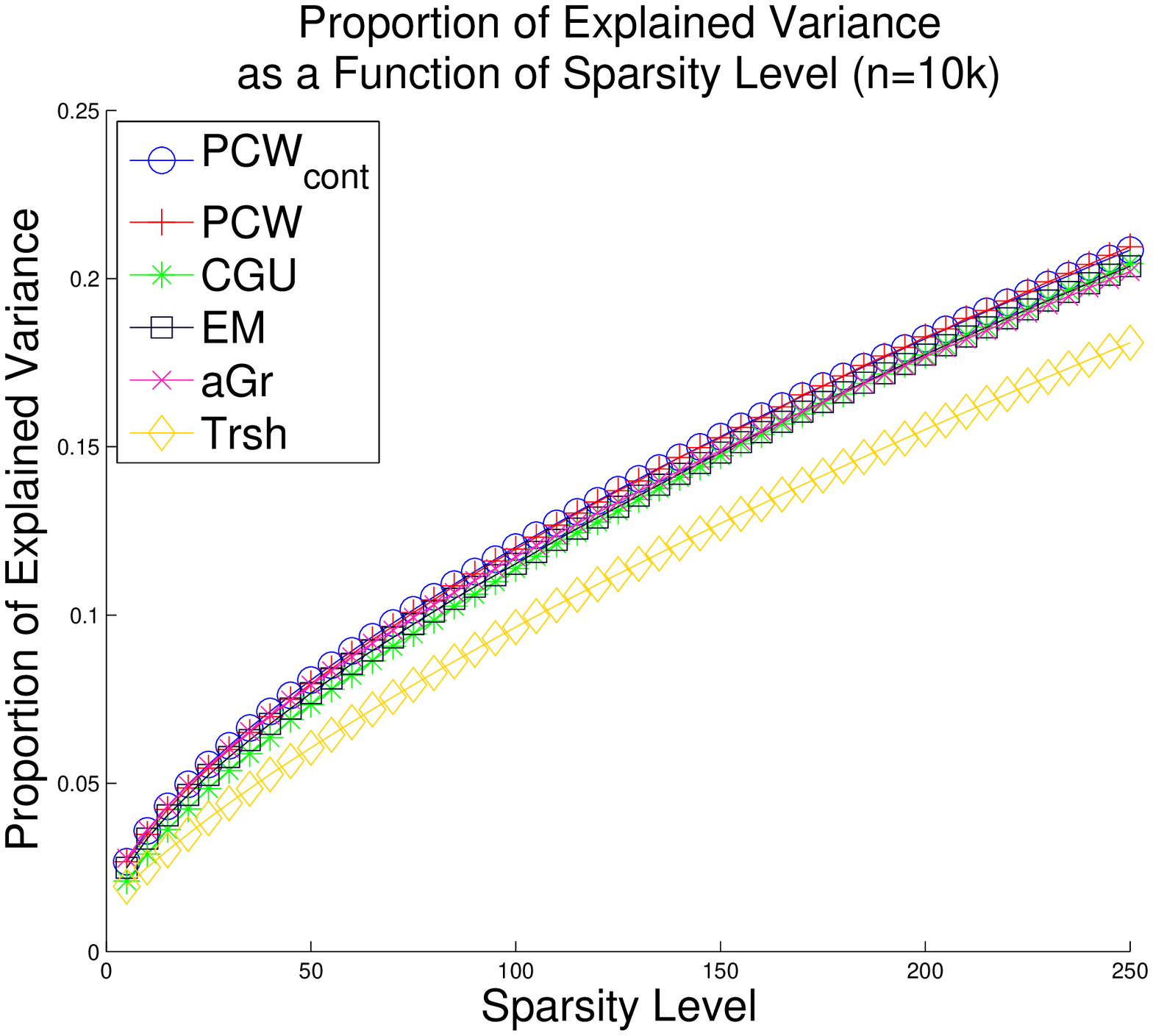}
\includegraphics[angle=0,width=0.4\textwidth]{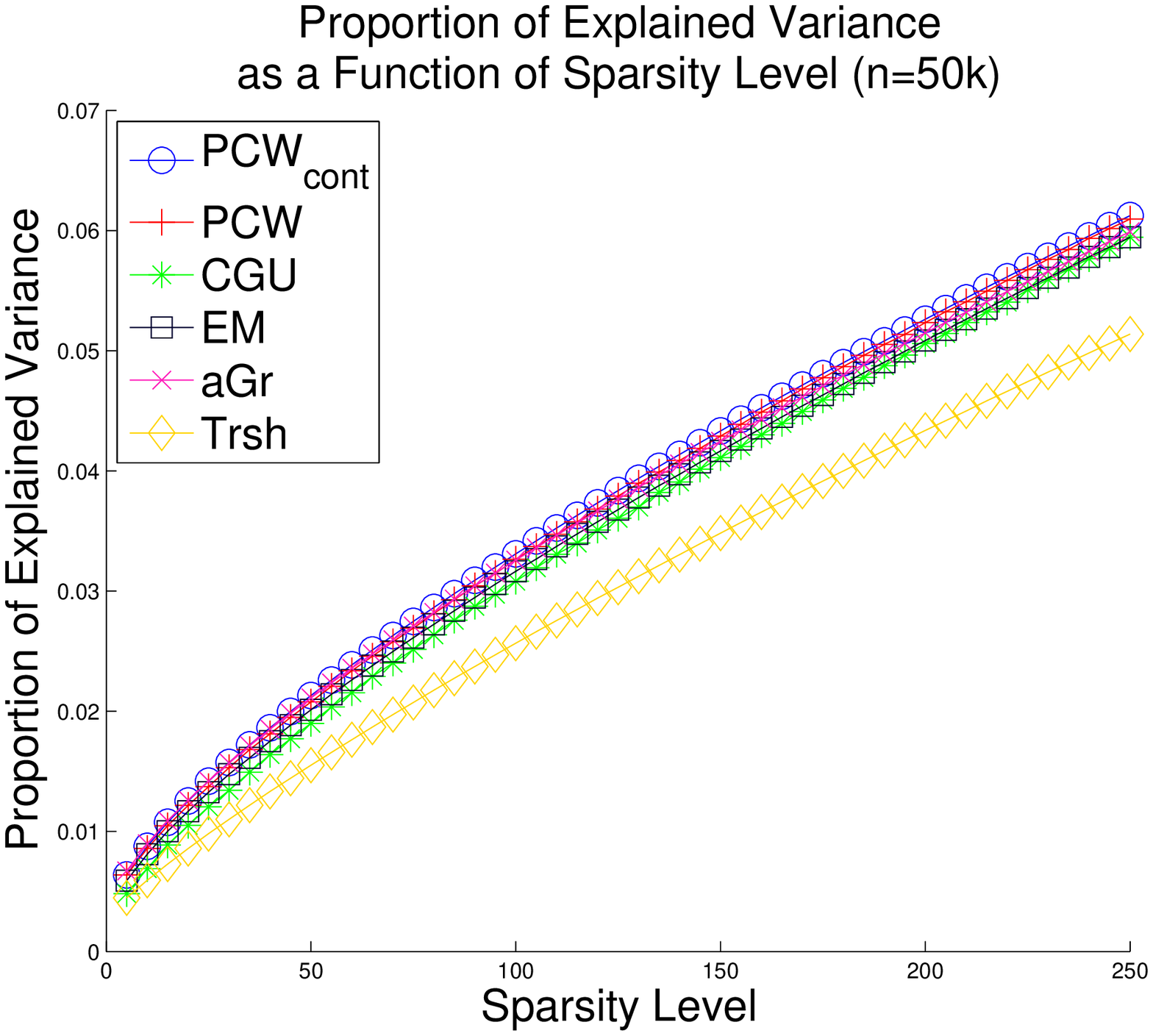}
\caption{\emph{PCW  vs. Others - The proportion of explained variability as a function of the sparsity level for $n=5000,10,000$ and $50,000$ are given in the upper left, upper right and bottom figures, respectively.}}
\label{NumRes:fig:P_Vs._Ot_fval}
\end{figure}
For small sparsity levels ($<50$) most of the algorithms provide similar results, but as the sparsity level is increased, the CW algorithms becomes superior to all the other methods. This advantage is not achieved without a price. In Figure \ref{NumRes:fig:P_Vs._Ot_telap} we provide the cumulative computation time of the algorithms (the cumulative time is considered since the approximate greedy algorithm provides a full set of solutions).

\begin{figure}[H]
\leavevmode
\centering
\includegraphics[angle=0,width=0.4\textwidth]{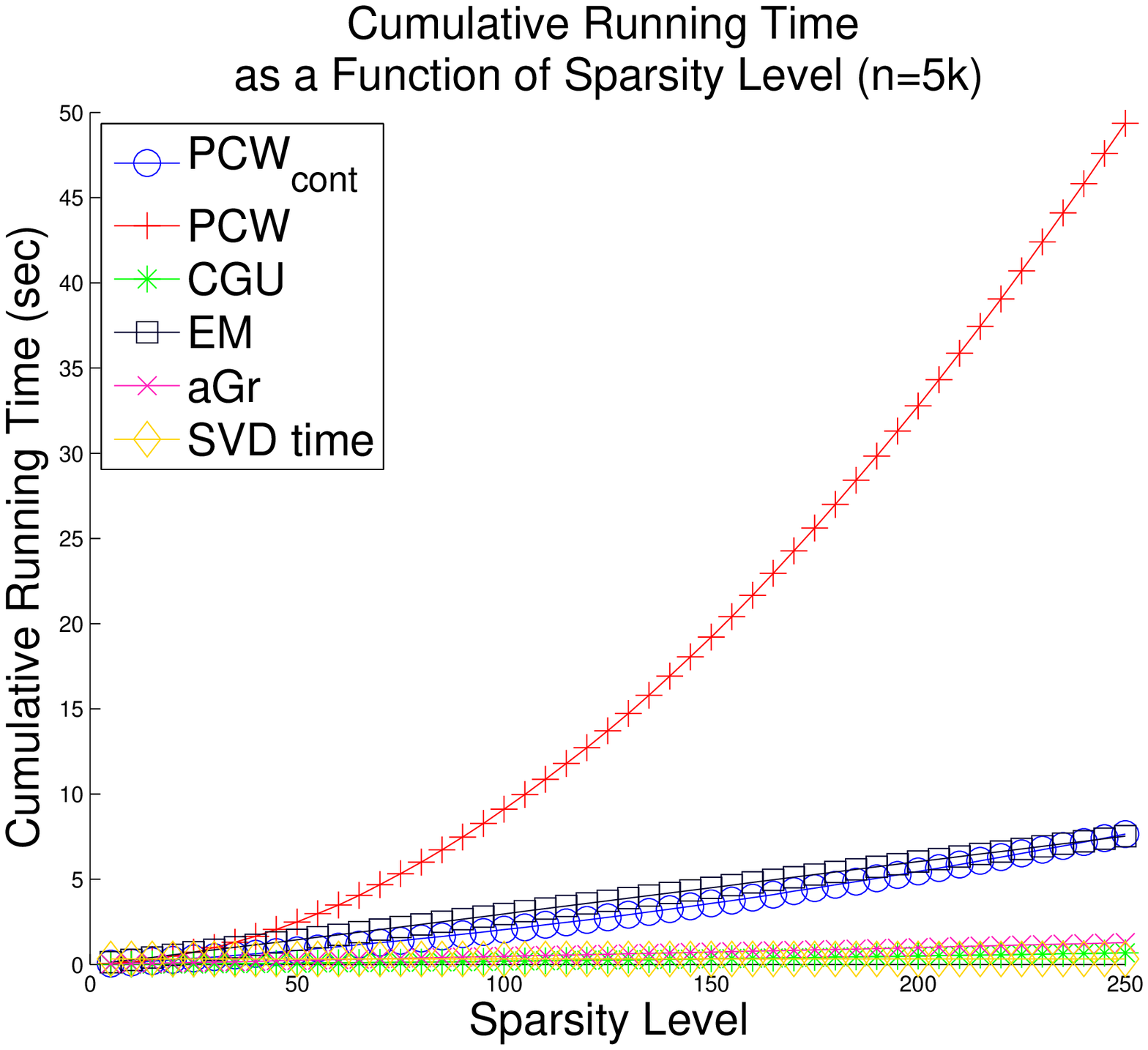}
\includegraphics[angle=0,width=0.4\textwidth]{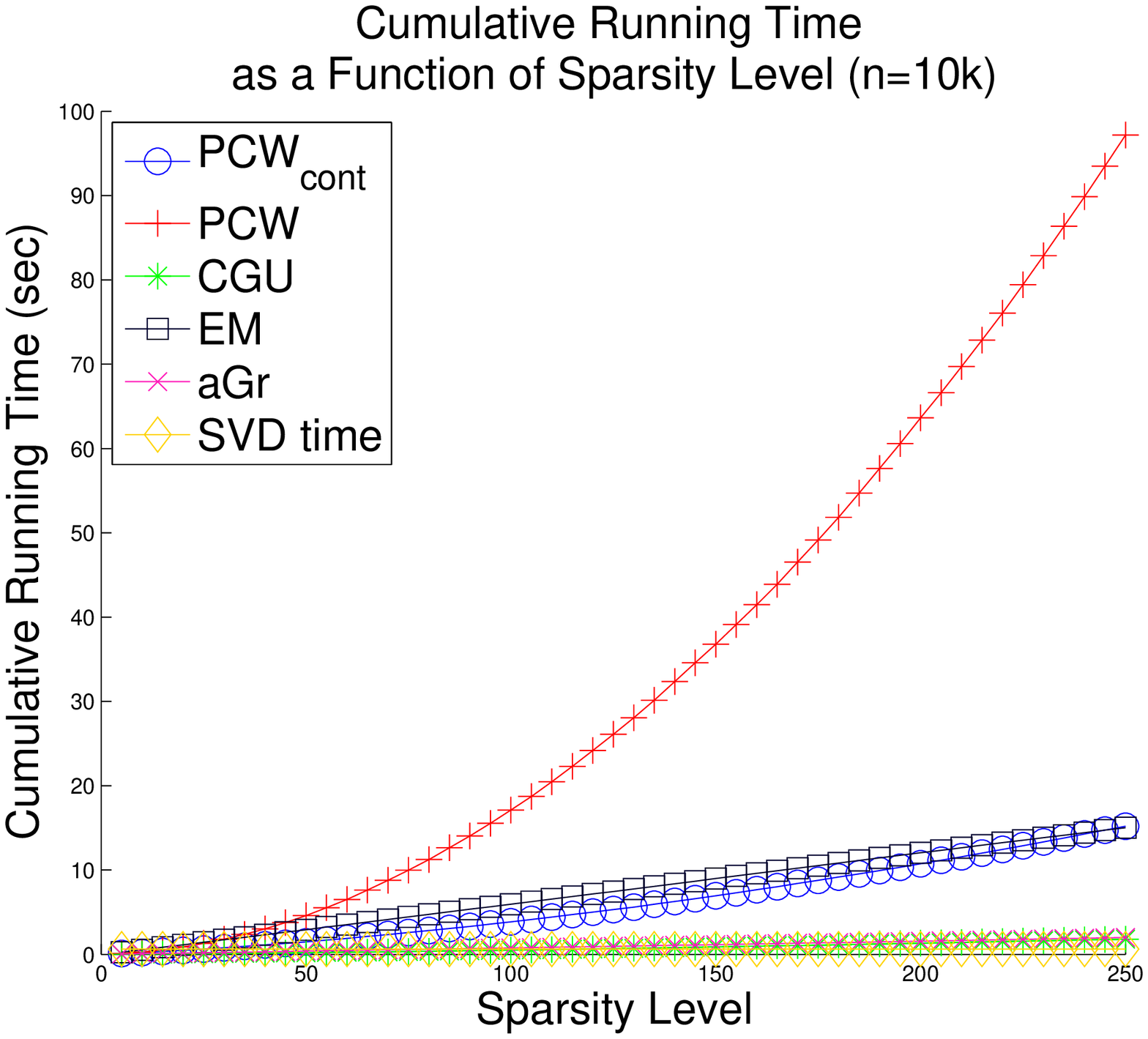}
\includegraphics[angle=0,width=0.4\textwidth]{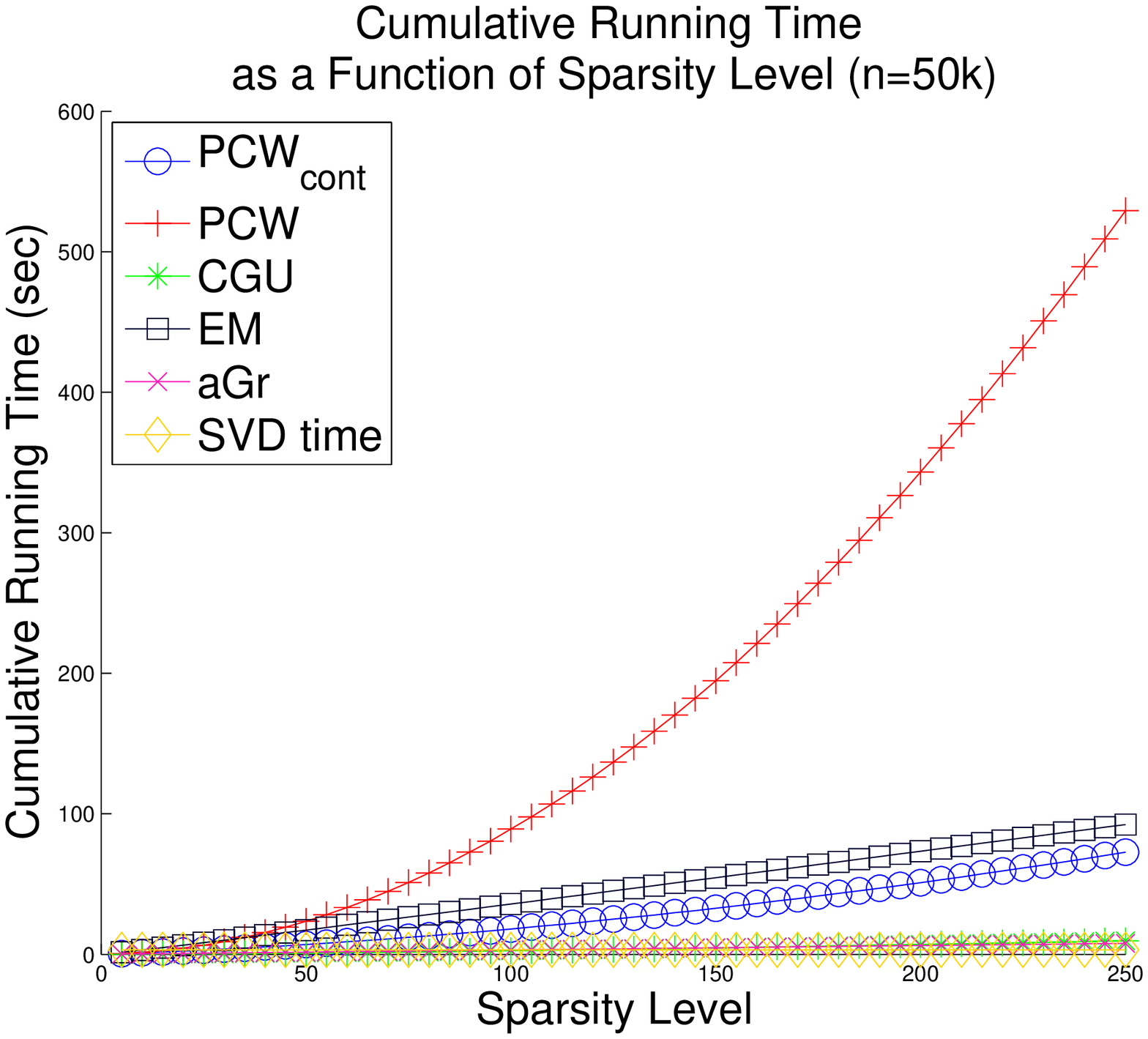}
\caption{\emph{PCW  vs. Alternative Methods - The cumulative computation times as a function of the sparsity level for $n=5,000,10,000$ and $50,000$ are given in the upper left, upper right and bottom, respectively. The SVD time is the time required for computing the principal eigenvector of the covariance matrix that corresponds to the generated data, which is used in order to find the thresholding solution, and in order to initialize the CW and ConGradU algorithms.}}
\label{NumRes:fig:P_Vs._Ot_telap}
\end{figure}

Even though PCW has greatly decreased the computation time with respect to GCW, it still requires a notably higher amount of computation time with respect to the alternative algorithms. The scheme we referred as $\text{PCW}_{cont}$ achieves similar results to PCW with respect to the function value. Regarding the running time, this scheme is competitive to the EM algorithm and requires somewhat more computational effort than the ConGradU and approximate greedy algorithms, thus providing a reasonable approach when a full set of solutions is required.

\subsection{Gene Expression Dataset}
\label{subsec:gene_data}
Sparse PCA is extensively utilized in the identification of the genes that reflect the changes in the gene expression patterns during different biological states, thus contributing to the diagnosis and research of certain diseases such as cancer. Figure \ref{NumRes:fig:GE} illustrates the proportion of explained variability and the cumulative running time for a Leukemia data set \cite{3-1_29_Armstrong2002}. This data set is composed from gene expression profiles of 72 patients with 12582 genes. The data set is normalized such that it has zero mean and unit variance.
\begin{figure}[H]
\leavevmode
\centering
\includegraphics[angle=0,width=0.45\textwidth]{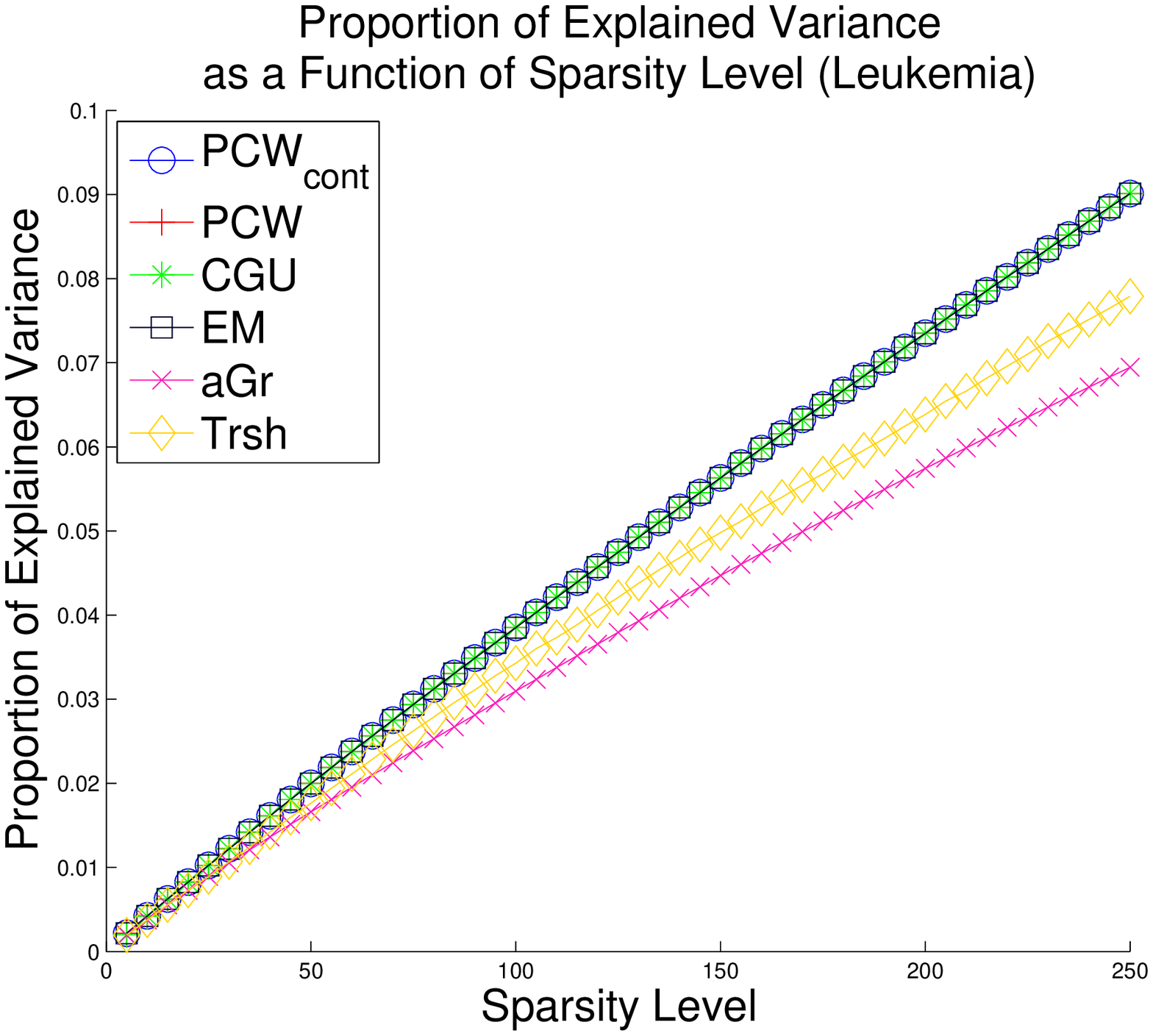}
\includegraphics[angle=0,width=0.45\textwidth]{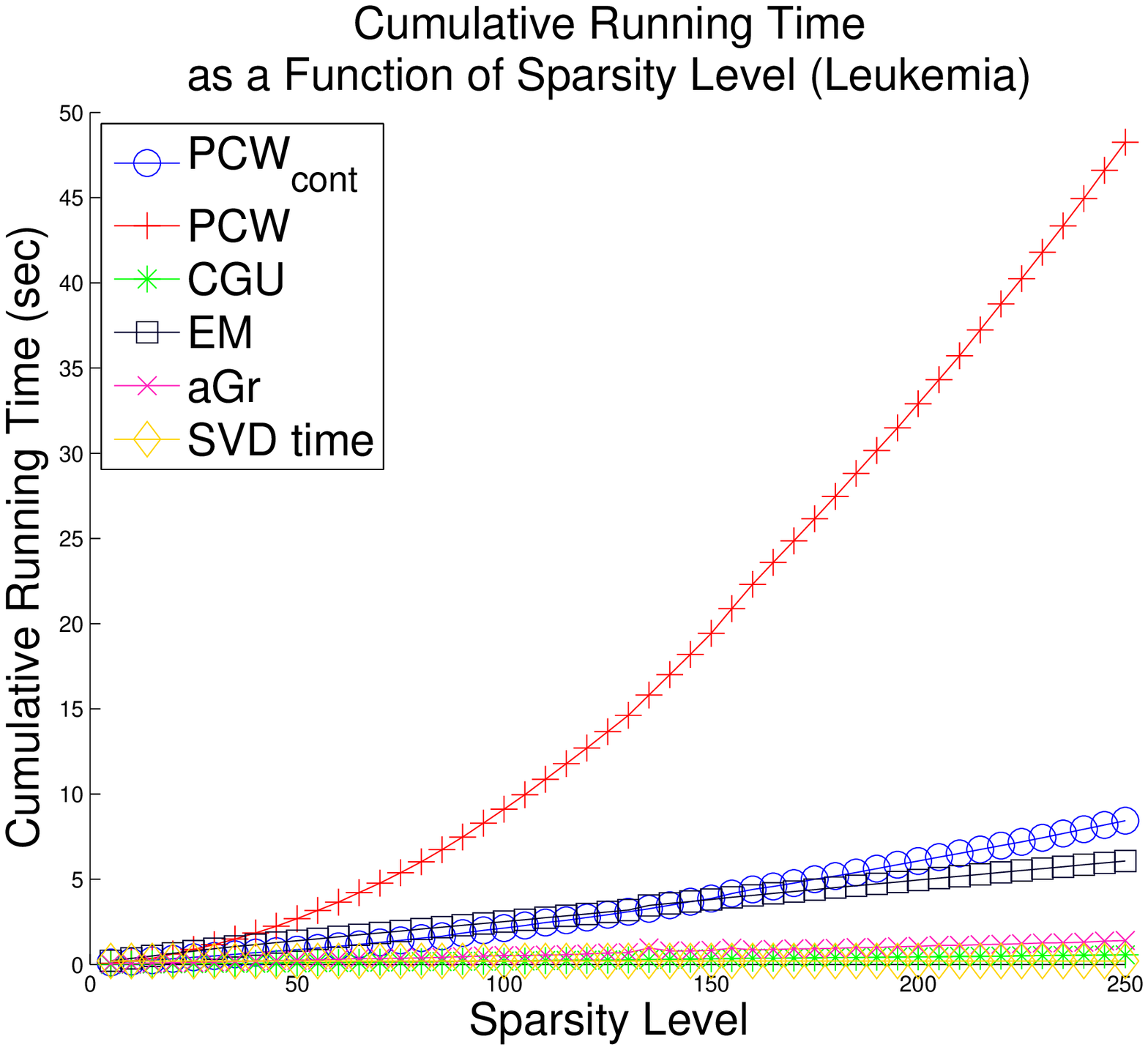}
\caption{\emph{Leukemia Gene Expression Data - The proportion of explained variability is given in the left figure and the cumulative computation time is given in the right one. The plot in both figures are given as a function of the sparsity level.}}
\label{NumRes:fig:GE}
\end{figure}

Most of the algorithms under consideration provide similar results with respect to the explained variability, which might indicate that this problem is, in a sense, rather easy to solve. We conducted similar experiments for additional 20 gene expression data sets from the GeneChip oncology database \cite{8-2} that is publicly available in:\\
\centerline{\url{http://compbio.dfci.harvard.edu/compbio/tools/gcod}}\\
while commonly, all the algorithms provided similar results, we can still see in Figure \ref{NumRes:fig:GEmany} that PCW yields the best solution (with respect to the function value) more times than the alternative algorithms, and consequently it obtains the smallest mean error with respect to the best solution.

\begin{figure}[H]
\leavevmode
\centering
\includegraphics[angle=0,width=0.45\textwidth]{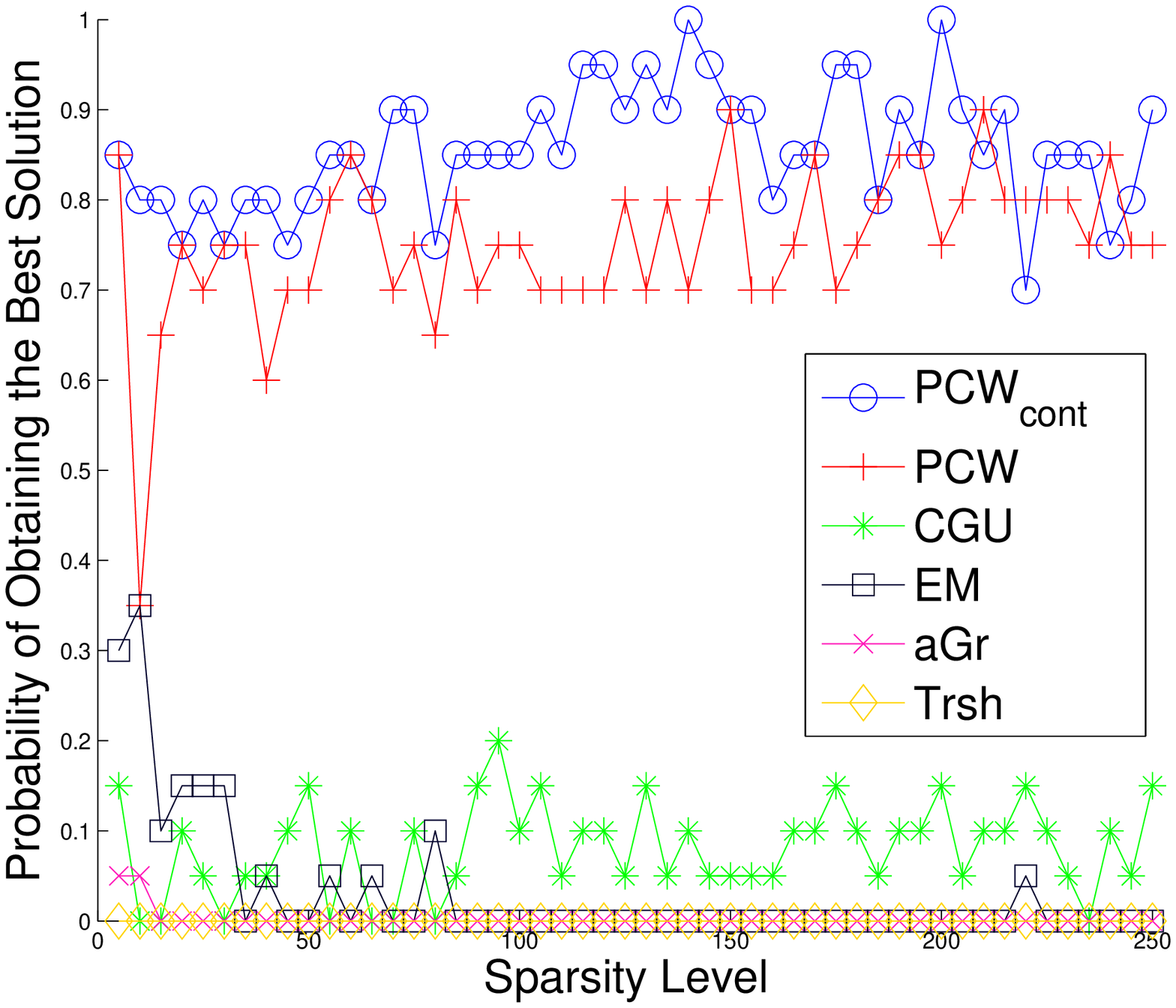}
\includegraphics[angle=0,width=0.45\textwidth]{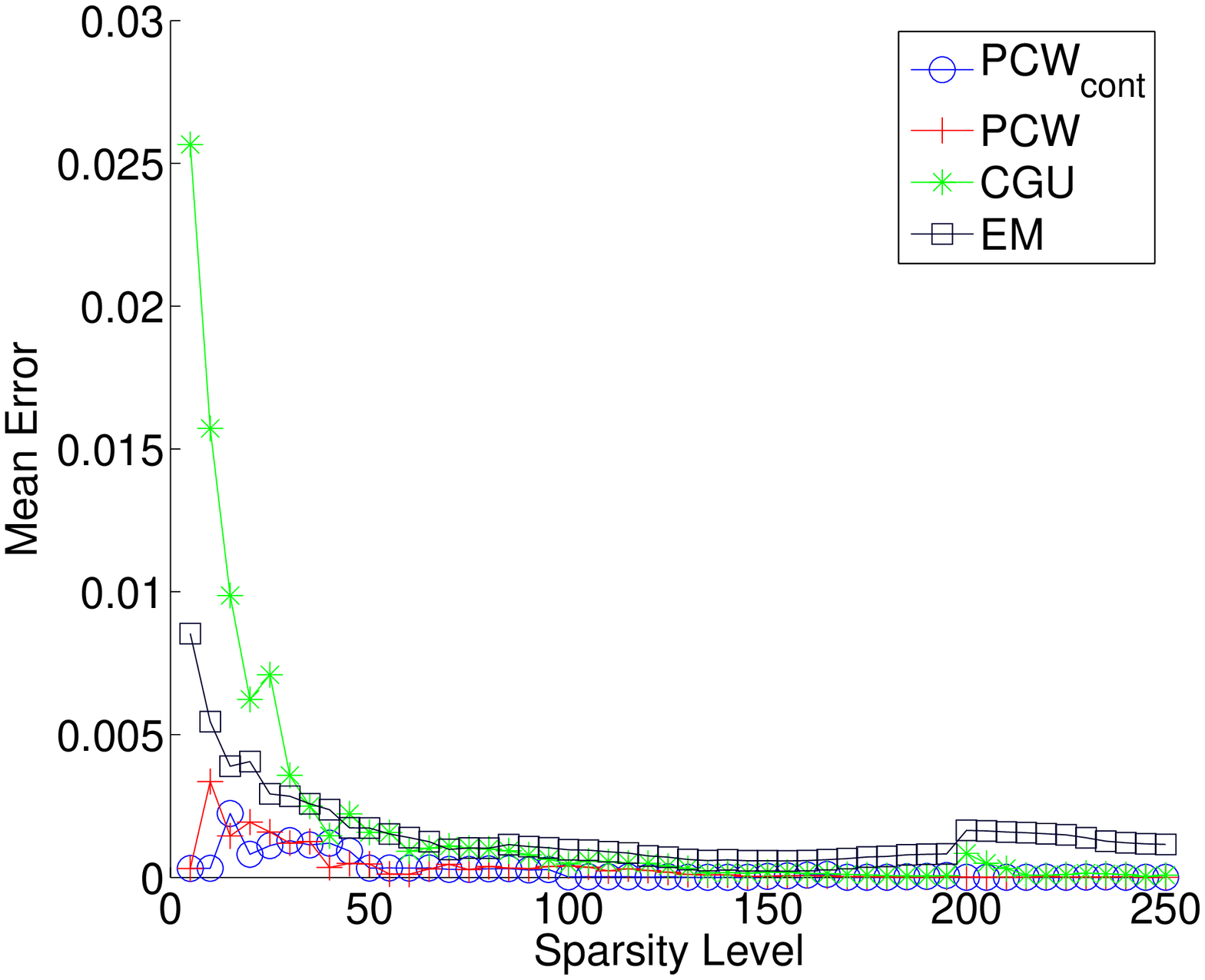}
\caption{\emph{Gene Expression Data - The left figure illustrates for each sparsity level the proportion of the number of data sets for which each algorithm obtained the best solution. The right figure illustrates for each sparsity level the mean error with respect to the best solution (the approximate greedy and thresholding algorithms were disregarded since both of them provide relative poor results).}}
\label{NumRes:fig:GEmany}
\end{figure}
	\section{Conclusions}
	In this paper, we considered the problem of maximizing a continuously differentiable convex function over the intersection of an $l_2$ unit ball and a sparsity constraint.   We have shown that coordinate-wise maximality is a more restrictive condition than  co-stationarity, which is the basis of many well-known methods for solving the sparse PCA problem. We introduced two algorithms (GCW and PCW) that are guaranteed to produce a CW-maximal solution, and demonstrated empirically the potential benefit of using this algorithms over some common algorithms proposed for this problem.

\begin{acknowledgements}
We would like to thank two anonymous referees for their helpful remarks that helped to improve the presentation of the paper.
\end{acknowledgements}
\bibliographystyle{spmpsci_unsrt}


\bibliography{PCA}


\end{document}